\DeclareMathOperator{\Forall}{\forall\,}
\newcommand{\dx}{\,\mathrm{d}x}
\newcommand{\bfn}{\boldsymbol n}
\newcommand{\bfx}{\boldsymbol x}
\newcommand{\bfv}{\boldsymbol v}
\newcommand{\bfu}{\boldsymbol u}
\newcommand{\bfw}{\boldsymbol w}
\newcommand{\bff}{\boldsymbol f}
\newcommand{\bfzero}{\boldsymbol 0}
\newcommand{\bfV}{\boldsymbol V\!} % NB: make a little tighter
\newcommand{\bfW}{\boldsymbol W\!} % NB: make a little tighter
\newcommand{\bfpi}{{\boldsymbol \pi}}
\newcommand{\mcK}{\mathcal{K}}
\newcommand{\tn}{|\mspace{-1mu}|\mspace{-1mu}|}
\DeclareMathOperator*{\divv}{div}
\newcommand{\IR}{\mathbb{R}}
\newcommand{\msE}{\mathsf{E}}
\newtheorem{theorem}{Theorem}
\newtheorem{thm}[theorem]{Theorem}
\newtheorem{prop}[theorem]{Proposition}
\newtheorem{rem}[theorem]{Remark}
\newtheorem{lem}[theorem]{Lemma}
\numberwithin{equation}{section}  % this is non-bmc
\newtheorem{ass}{Assumption}
\newcommand\csentence[1]{#1}  % this is non-bmc
\begin{document}

%%% Start of article front matter
%\begin{frontmatter}

%\begin{fmbox}
%\dochead{Research}

%%%%%%%%%%%%%%%%%%%%%%%%%%%%%%%%%%%%%%%%%%%%%%
%%                                          %%
%% Enter the title of your article here     %%
%%                                          %%
%%%%%%%%%%%%%%%%%%%%%%%%%%%%%%%%%%%%%%%%%%%%%%

\title{High Order Cut Finite Element Methods for the Stokes Problem}

\author{August Johansson}
\email{august@simula.no}
\address{Center for Biomedical Computing, Simula Research Laboratory, Norway}

\author{Mats G.\ Larson}
\email{mats.larson@math.umu.se}
\address{Department of Mathematics, Ume{\aa} University, Sweden}

\author{Anders Logg}
\email{logg@chalmers.se}
\address{Mathematical Sciences, Chalmers University of Technology and University of Gothenburg, Sweden}

\thanks{This research was supported in part by the Swedish Foundation
  for Strategic Research Grant No.\ AM13-0029, the Swedish Research
  Council Grant No.\ 2013-4708, and the Swedish Research Council Grant
  No.\ 2014-6093.  The work was also supported by The Research Council
  of Norway through a Centres of Excellence grant to the Center for
  Biomedical Computing at Simula Research Laboratory, project number
  179578. }

 \begin{abstract} % abstract
  We develop a high order cut finite element method for the Stokes
  problem based on general inf-sup stable finite element spaces. We
  focus in particular on composite meshes consisting of one mesh that
  overlaps another. The method is based on a Nitsche formulation of
  the interface condition together with a stabilization term. Starting
  from inf-sup stable spaces on the two meshes, we prove that the
  resulting composite method is indeed inf-sup stable and as a
  consequence optimal \emph{a~priori} error estimates hold.

   % \parttitle{First part title} %if any
%% Text for this section.

%% \parttitle{Second part title} %if any
%% Text for this section.
\end{abstract}

\maketitle % this is non-bmc

\section{Background}

\subsection{Introduction}

Meshing of complex geometries remains a challenging and time consuming
task in engineering applications of the finite element method. There
is therefore a demand for finite element methods based on more
flexible mesh constructions. One such flexible mesh paradigm is the
formulation of finite element methods on composite meshes created by
letting several meshes overlap each other. This approach enables using
combinations of meshes for certain parts of a domain and reuse of
meshes for complicated parts that may have been difficult and time
consuming to construct.

We consider the case of a composite mesh consisting of one mesh that
overlaps another mesh which together provide a mesh of the
computational domain of interest. This results in some elements on one
mesh having an intersection with one or several elements on the
boundary of the other mesh. We denote such elements by cut
elements. The interface conditions on these cut elements are enforced
weakly and consistently using Nitsche's method \cite{Nitsche71}.

In this setting \cite{HanHanLar} first developed and analyzed a
composite mesh method for elliptic second order problem based on
Nitsche's method. In \cite{MassingStokes}, this approach was extended
to the Stokes problem using suitable stabilization to ensure inf-sup
stability of the method. Implementation aspects were discussed in
detail in \cite{MasLarLog13}. In \cite{ZahediP1isoP2} a related cut
finite element method for a Stokes interface problem based on the
P1-iso-P2 element was developed and analyzed.

Composite mesh techniques using domain decomposition are often called
chimera, see for example \cite{henshaw}, \cite{chimera} for uses in a
finite difference setting or \cite{codina} in a finite element
setting. The extended finite element method (XFEM) also provides
composite mesh handling techniques, see for example
\cite{wall1,wall2}. However, the Nitsche method approach using cut
elements used in this work makes it possible to obtain a consistent
and stable formulation while maintaining the conditioning of the
algebraic system for both conforming and non-conforming high order
finite elements.

In this paper, we consider Stokes flow and device a method based on a
stabilized Nitsche formulation for enforcement of the interface
conditions at the border between the two meshes. A specific feature is
that we only assume that we have inf-sup stable spaces on the two
meshes and that the spaces consist of polynomials. We can then show
that our stabilized Nitsche formulation satisfies an inf-sup condition
and as a consequence optimal order also \emph{a~priori} error
estimates hold. We emphasize that the spaces are arbitrary and can be
different on the two meshes, in particular, continuous or
discontinuous pressure spaces as well as higher order spaces can be
used. We present extensive numerical results for higher order
Taylor-Hood elements in two and three spatial dimensions that confirm
our theoretical results.

%% The outline of the paper is as follows: In Section 2 we present the
%% Stokes problem, in Section 3 we describe the composite mesh, in
%% Section 4 we introduce the finite element spaces and formulate the
%% method, in Section 5 we provide the analysis of the method where the
%% inf-sup condition is the main result. Finally, in Section 6 we present
%% the numerical results followed by conclusions in Section 7.

The outline of the paper is as follows: First we review the Stokes
problem. Then the finite element method is presented by first defining
the composite mesh and introducing finite element spaces. The method
is then analyzed where the inf-sup condition is the main
result. Finally we present the numerical results and the conclusions.

%------------------------------------------------------------------------------
\subsection{The Stokes Problem}

In this section, we review the Stokes problem and state its
standard weak formulation. We also introduce some basic notation.

\subsubsection{Strong form}

Let $\Omega$ be a polygonal domain in $\IR^d$ with boundary
$\partial \Omega$.  The Stokes problem takes the form: Find the
velocity $\bfu : \Omega \rightarrow \IR^d$ and pressure
$p : \Omega \rightarrow \IR$ such that
\begin{alignat}{2}
  \label{eq:stokes1}
  -\Delta \bfu + \nabla p &= \bff \qquad &&\text{in $\Omega$},
  \\
  \label{eq:stokes2}
  \divv \bfu &= 0 \qquad &&\text{in $\Omega$},
  \\
  \bfu &= \bfzero \qquad &&\text{on $\partial \Omega$},
\end{alignat}
where $\bff: \Omega \rightarrow \IR^d$ is a given right-hand side.

\subsubsection{Weak form}

As usual, let $H^s(\Omega)$ denote the standard Sobolev space of order
$s\geq 0$ on $\Omega$ with norm denoted by $\|\cdot \|_{H^s(\Omega)}$ and
semi-norm denoted by $|\cdot |_{H^s(\Omega)}$. Let $L^2(\Omega)$ denote the
$L^2$-norm on $\Omega$ with norm denoted by $\| \cdot \|_{\Omega}$. The
corresponding inner products are labeled accordingly.

Introducing the spaces
\begin{align}
  \bfV &= [H^1_0(\Omega)]^d, \\
  Q &= \{ q \in L^2(\Omega) : \int_\Omega q \dx = 0 \},
\end{align}
with norms $\|D \bfv \|_{\Omega} = \| \bfv \otimes \nabla \|_{\Omega}$
and $\| q \|_\Omega$, the weak form of \eqref{eq:stokes1} and
\eqref{eq:stokes2} reads: Find $(\bfu,p) \in \bfV \times Q$ such that
\begin{equation}\label{eq:weakform}
  a(\bfu,\bfv) + b(\bfu,q) + b (p,\bfv) = l(\bfv)\quad \Forall
  (\bfv,q) \in \bfV \times Q,
\end{equation}
where the forms are defined by
\begin{align}
  a(\bfu,\bfv) &= (D \bfu, D \bfv)_\Omega,
  \\
  b(\bfu,q) &= -(\divv \bfu,q)_\Omega = (\bfu, \nabla q)_\Omega,
  \\
  l(\bfv) &= (\bff,\bfv)_\Omega.
\end{align}
\begin{rem}
  We obtain the variational problem~\eqref{eq:weakform} by formally
  multiplying~\eqref{eq:stokes1} by a test function $\bfv$
  and~\eqref{eq:stokes2} by a test function $-q$.
\end{rem}
It is then possible to show that the inf-sup condition
\begin{equation}
  \|q\|_\Omega \lesssim \sup_{\bfv \in \bfV} \frac{b(\bfv,q)}{\| D\bfv \|_\Omega}
  = \sup_{\bfv\in V} \frac{(\divv \bfv, q)}{\|D \bfv\|_{\Omega}}
  \quad \Forall q \in Q
\end{equation}
holds, from which it follows that there exists a unique solution to
(\ref{eq:weakform}). See \cite{BreFor} for further details.

%------------------------------------------------------------------------------
\section{Methods}

\subsection{The Composite Mesh}
\label{sect:meshstuff}

We here present the concepts and notation of the domains and meshes
used. The main idea is to introduce a background domain which is
partially overlapped by another domain (the overlapping domain).  For
each of these domains, we mimic the setup of a traditional finite
element method in the sense that each domain is equipped with a
traditional finite element mesh. The two meshes are completely
unrelated. In particular, the interface between the two meshes is
determined by the overlapping domain and is not required to match or
align with the triangulation of the background domain.

\subsubsection{The composite domain}

Let the \emph{predomains} ${\widehat{\Omega}}_i \subset \Omega$,
$i=0,1$, be polygonal subdomains of $\Omega$ in $\IR^d$ such that
$\widehat{\Omega}_0 \cup \widehat{\Omega}_1 = \Omega$; see Figure
\ref{fig:domains}.  Consider the partition
\begin{align}
  \label{eq:omega_partition}
  \Omega &= \Omega_0 \cup \Omega_1, \\
  \Omega_0 &= \Omega \setminus \widehat{\Omega}_1, \\
  \Omega_1 &= \widehat{\Omega}_1,
\end{align}
and let $\Gamma = \partial \Omega_1 \setminus \partial \Omega$ be the
interface between the overlapping domain $\Omega_1$ and the underlying
domain $\Omega_0$; see Figure \ref{fig:domains}. We make the basic
assumption that each $\Omega_i$, $i = 0,1$, has a nonempty interior.
We note that implies that there exists a nonempty open set
$U\in\Omega$ such that $\Gamma\cap U \neq \emptyset$. (The set $U$
plays an important role in the proof of
Lemma~\ref{lem:infsupconstants} below.)

\begin{figure}
  \includegraphics[width=0.65\textwidth]{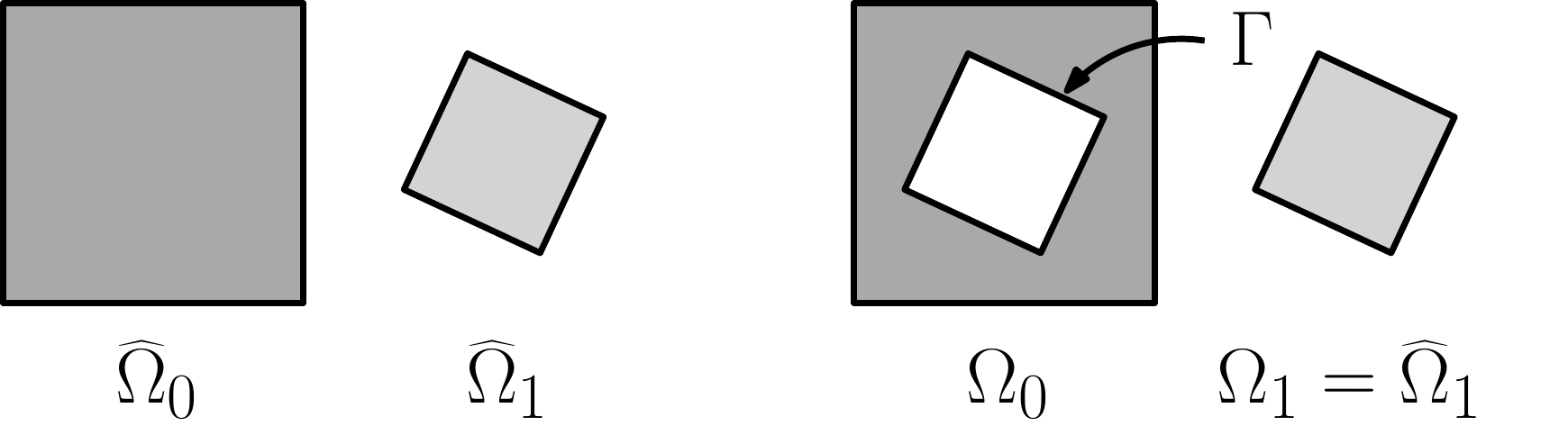}
  \caption{\csentence{The domains $\widehat{\Omega}_i$ and the subdomains
    $\Omega_i$ (all shaded) sharing the interface $\Gamma$.}}
  \label{fig:domains}
\end{figure}

\subsubsection{The composite mesh}

For $i=0,1$, let $\widehat{\mcK}_{h,i}$ be a quasi-uniform mesh on
$\widehat{\Omega}_i$ with mesh parameter $h \in (0,\bar{h}]$ and let
\begin{equation}
  \mcK_{h,i} = \{K \in \widehat{\mcK}_{h,i} :
  \overline{K} \cap \Omega_i \neq \emptyset \}
\end{equation}
be the submesh consisting of elements that intersect $\Omega_i$; see
Figure \ref{fig:meshes}. Note that $\mcK_{h,0}$ includes elements that
partially intersect $\Omega_1$. We also introduce the notation
\begin{equation}
  \label{eq:omegah}
  \Omega_{h,i} = \bigcup_{K \in \mcK_{h,i}} K.
\end{equation}
Note that $\Omega_1 = \Omega_{h,1}$ and
$\Omega_0 \subset \Omega_{h,0}$; see Figure \ref{fig:Omega_h}.

\begin{figure}
 \includegraphics[width=0.3\textwidth]{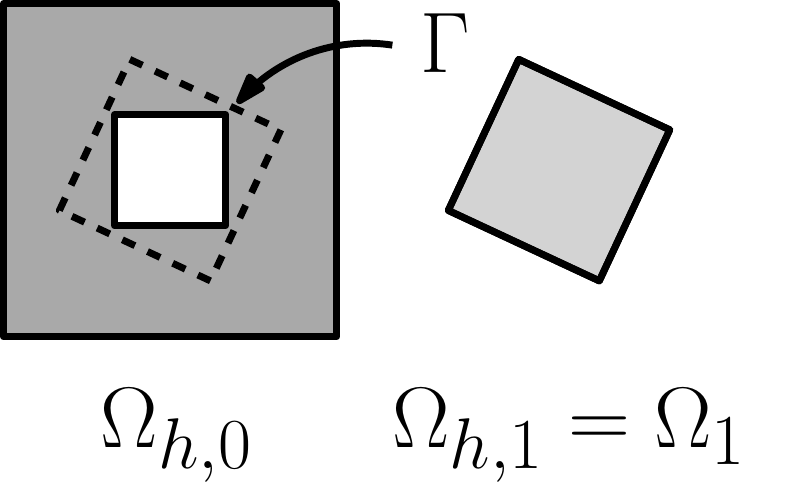}
  \caption{\csentence{The domains $\Omega_{h,i}$ (shaded).}}
  \label{fig:Omega_h}
\end{figure}

%% The composite mesh consists of the collection of the
%% meshes
%% \begin{equation}
%%   \mcK_{C,h} = \bigcup_{i=0}^1 \mcK_{h,i}
%% \end{equation}
%% and
We obtain a partition of $\Omega$ by intersecting the elements with
the subdomains:
\begin{equation}
  \bigcup_{i=0}^1 \mcK_{h,i} \cap \Omega_i = \bigcup_{i=0}^1
  \{K \cap \Omega_i : K \in \mcK_{h,i} \}.
\end{equation}
See also Figure \ref{fig:meshes}.

\begin{figure}
  \includegraphics[width=0.65\textwidth]{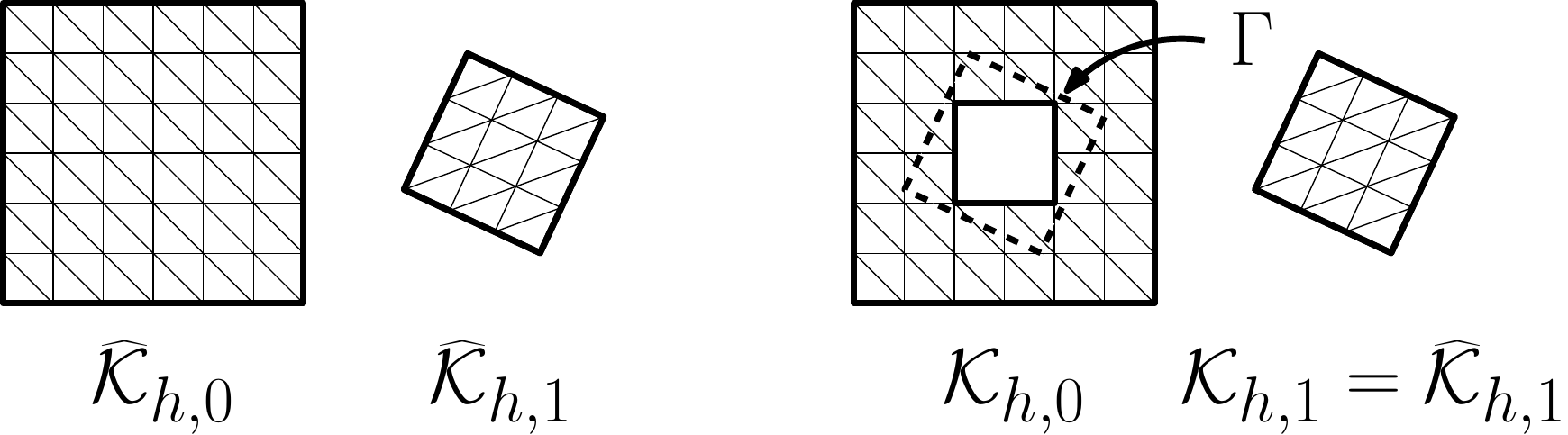}
  \caption{\csentence{The meshes $\widehat{\mcK}_{h,i}$ and $\mcK_{h,i}$ of the
    corresponding domains $\widehat{\Omega}_i$ and
    $\Omega_{h,i}$.} Note that $\Gamma$ is not aligned with
    $\mcK_{h,0}$.}
  \label{fig:meshes}
\end{figure}

%------------------------------------------------------------------------------

\subsection{Finite element formulation}

In this section, we present the finite element method for
approximating the weak form \eqref{eq:weakform}. Some notation will be
introduced, but the main idea is to assume we have inf-sup stable
spaces in each of the subdomains away from the interface. Then we are
able to formulate a method similar to \cite{HanHanLar} and
\cite{MassingStokes}.

\subsubsection{Finite element spaces}

For each of the predomains $\widehat{\Omega}_i$ with corresponding
family of meshes $\widehat{\mcK}_{h,i}$ we consider velocity and
pressure finite element spaces
$\widehat{\bfV}_{h,i}\times \widehat{Q}_{h,i}$. The spaces do not
contain boundary conditions since these will be enforced by the finite
element formulation. We define
\begin{equation}
  \bfV_{h,i} \times Q_{h,i}
  = \widehat{\bfV}_{h,i}|_{\Omega_{h,i}}
  \times \widehat{Q}_{h,i}|_{\Omega_{h,i}},
\end{equation}
where $i=0,1$ and define
\begin{equation}
  \bfV_h \times Q_h
  = \bigoplus_{i=0}^1 \bfV_{h,i} \times Q_{h,i}.
\end{equation}
Note that since the domains $\Omega_{h,i}$ overlap each other,
$\bfV_h \times Q_h$ is to be understood as a collection of function
spaces on the overlapping patches $\Omega_{h,i}$, $i=0,1$. We now make
the following fundamental assumptions on these spaces:

%% the proper way to think about $\bfV_h \times Q_h$ is
%% as a collection of function spaces on the overlapping
%% patches $\{\Omega_{h,i}\}_{i=0}^1$.

\begin{ass}[Piecewise polynomial spaces]
  \label{assumptionA}
  The finite element spaces $\bfV_h$ and $Q_h$ consist of piecewise
  polynomials of uniformly bounded degree $k$ and $l$, respectively.
\end{ass}

\begin{ass}[Inf-sup stability]
  \label{assumptionB}
  The finite element spaces are inf-sup stable restricted to a domain
  bounded away from the interface. More precisely, we assume that for
  $i=0,1$ and $h \in (0,\bar{h}]$ there is a domain
  $\omega_{h,i} \subset \Omega_i$ such that:
    \begin{trivlist}
    \item(a) The set $\omega_{h,i}$ is a union of elements
      in $\mcK_{h,i}$; see Figure \ref{fig:small_omega_h}.
    \item(b) The inf-sup condition
      \begin{equation}\label{assum:infsup}
        m_i \| p_i - \lambda_{\omega_{h,i}}(p) \|_{\omega_{h,i}} \leq \sup_{\bfv \in \bfW_{h,i}} \frac{(\divv \bfv,p)_{\omega_{h,i}}}{\|D \bfv \|_{\omega_{h,i}}}
      \end{equation}
      holds, where $\lambda_{\omega_{h,i}}(p)$ is the average of $p$
      over $\omega_{h,i}$ and $\bfW_{h,i}$ is the subspace
      of $\bfV_{h,i}$ defined by
      \begin{align}
        \label{eq:bfW}
        \bfW_{h,0} &=
        \{\bfv \in \bfV_{h,0}: \text{$\bfv=\bfzero$ \text{on} $\overline{\Omega_{h,0}
          \setminus \omega_{h,0}}$}\}, \\
        \bfW_{h,1} &= \bfV_{h,1}.
      \end{align}
    \item(c) The set $\omega_{h,0}$ is close to $\Omega_0$
      in the sense that
      \begin{equation}\label{assum:omegahclose}
        \Omega_{h,0} \setminus \omega_{h,0}
        \subset U_\delta(\Gamma),
        \quad \delta \sim h,
      \end{equation}
      where $U_\delta(\Gamma) =\{ \bfx \in \IR^d :
      |\rho(\bfx)|<\delta\}$ is the tubular neighborhood of
      $\Gamma$ with thickness $\delta$.
    \end{trivlist}
\end{ass}

\begin{rem}
  The assumptions presented ensure that the polynomial spaces are such
  that certain inverse inequalities hold. More generally, inverse
  inequalities hold if there is a finite set of finite dimensional
  reference spaces used to construct the element spaces. The use of
  the interpolant in the proof of Lemma \ref{lem:infsupconstants}
  could alternatively be handled using an abstract approximation
  property assumption.
\end{rem}

\begin{figure}
  \includegraphics[width=0.3\textwidth]{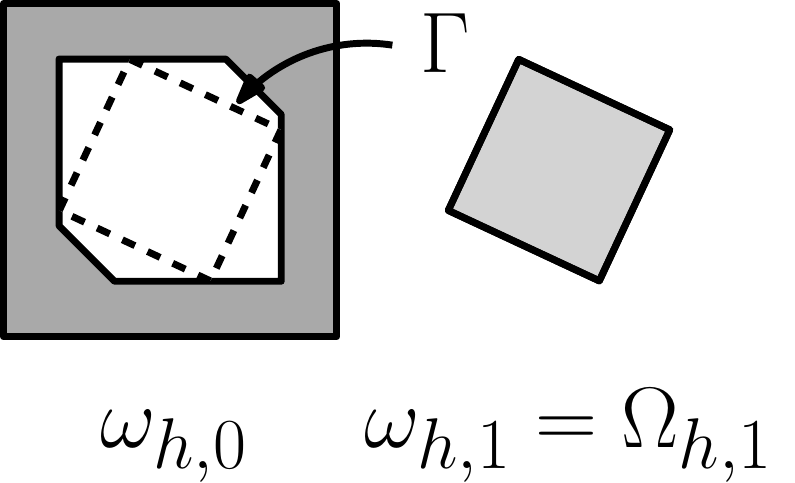}\qquad
  \caption{\csentence{The domains $\omega_{h,i}$ (shaded) where inf-sup stability
    is assumed.}  Note that $\Gamma$ is outside $\omega_{h,0}$.}
  \label{fig:small_omega_h}
\end{figure}

%% \begin{rem}
%%   In practice we set $\omega_{h,0} = \Omega_{h,0} \setminus
%%   \Omega_{h,\Gamma}$, where $\Omega_{h,\Gamma} = \{ K \in
%%   \Omega_{h,0}: K \cap \Gamma \neq \emptyset \}$. Also note that
%%   $\omega_{h,1}=\Omega_1$.
%% \end{rem}

\subsubsection{Finite element method}

We consider the finite element method: Find
$(\bfu_h,p_h) \in \bfV_h \times Q_h$ such that
\begin{equation}
  \label{eq:fem}
  A_h((\bfu_h,p_h),(\bfv,q)) = l_h(\bfv) \quad \Forall
  (\bfv,q) \in \bfV_h \times Q_h,
\end{equation}
where the forms are defined by
\begin{align}
  \label{eq:Ah}
  A_h((\bfu,p),(\bfv,q)) &= a_h(\bfu,\bfv) + b_h(\bfu,q)
  + b_h(\bfv,p) + d_h((\bfu,p),(\bfv,q)), \\
  \label{eq:ah}
  a_h(\bfu,\bfv) &= a_{h,N}(\bfu,\bfv) + a_{h,O}(\bfu,\bfv),
  \\
  a_{h,N}(\bfu,\bfv) &= (D \bfu, D \bfv)_{\Omega_0}
  + (D \bfu, D \bfv)_{\Omega_1}
  \\ \nonumber
  &\qquad -
  (\langle (D \bfu) \cdot \bfn \rangle,[ \bfv])_{\Gamma}
  - ([ \bfu], \langle (D \bfv) \cdot \bfn \rangle)_{\Gamma}
  \\ \nonumber
  &\qquad + \beta h^{-1}([\bfu],[ \bfv])_{\Gamma},
  \\
  a_{h,O}(\bfu,\bfv)&=
  ([D\bfu], [D\bfv])_{\Omega_{h,0} \cap \Omega_1},
  \\
  \label{eq:bh}
  b_h(\bfu,q) &= - (\divv \bfu,q)_{\Omega_0}
  - (\divv \bfu,q)_{\Omega_1}
  + ([\bfn \cdot \bfu],\langle q \rangle )_{\Gamma},
  \\
  \label{eq:dh}
  d_h((\bfu,p),(\bfv,q))&=
  h^2(\Delta \bfu - \nabla p,\Delta \bfv + \nabla q)_{\Omega_{h,0}\setminus \omega_{h,0}},
  \\
  l_h(\bfv) &= (\bff,\bfv)_\Omega
  - h^2(\bff,\Delta \bfv + \nabla q)_{\Omega_{h,0}\setminus \omega_{h,0}}.
\end{align}
Here, $\bfn$ is the unit normal to $\Gamma$ exterior to $\Omega_1$,
$[v] = v_1 - v_0$ is the jump at the interface $\Gamma$ and
$\langle v \rangle = (v_0 + v_1)/2$ is the average at $\Gamma$
(although any convex combination is valid \cite{HanHanLar}). The
parameter $\beta>0$ is the Nitsche parameter and must be sufficiently
large (see for example \cite{HanHanLar}) and scales as $k^2$, where
$k$ is the polynomial degree. Furthermore, $h$ is the representative
mesh size of the quasi-uniform mesh. In a practical implementation,
$h$ is evaluated as the local element size.

A comment on the respective terms may be clarifying: $a_{h,N}$ and
$b_h$ are the standard Nitsche formulation of \eqref{eq:weakform} and
$a_{h,O}$ is a stabilization of the jump of the gradients across
$\Gamma$ (see \cite{MassingStokes}). The least-squares type term $d_h$
stabilizes the method since we do not assume inf-sup stability in all
of $\Omega_0$.

By simple inspection, we note that the method is consistent.  We
conclude by noting that the method satisfies the Galerkin
orthogonality.

\begin{prop}[Galerkin orthogonality]
  \label{prop:galerkinorth}
  Let $(\bfu,p) \in \bfV \times Q$ be a weak solution to the
  formulation \eqref{eq:weakform} and let $(\bfu_h,p_h) \in \bfV_h
  \times Q_h$ be the solution to the finite element formulation
  \eqref{eq:fem}. Then it holds
  \begin{align}
    \label{eq:galerkinorth}
    A_h((\bfu,p)-(\bfu_h,p_h),(\bfv_h,q_h)) = 0 \quad \Forall
    (\bfv_h,q_h) \in \bfV_h \times Q_h.
  \end{align}
\end{prop}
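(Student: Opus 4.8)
The plan is to reduce the statement to the \emph{consistency} of the method. Since $(\bfu_h,p_h)$ solves \eqref{eq:fem}, we have $A_h((\bfu_h,p_h),(\bfv,q))=l_h(\bfv)$ for all $(\bfv,q)\in\bfV_h\times Q_h$, and $A_h$ is linear in its first argument. Hence it suffices to prove that the weak solution reproduces the right-hand side, $A_h((\bfu,p),(\bfv,q))=l_h(\bfv)$ for all discrete $(\bfv,q)$; subtracting the two identities then gives \eqref{eq:galerkinorth} at once. So the entire proof is the verification of this consistency identity for the single-valued pair $(\bfu,p)$.

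First I would exploit that $(\bfu,p)$ is the restriction to $\Omega_0$ and $\Omega_1$ of one globally defined solution, smooth enough (by elliptic regularity for Stokes on the polygonal subdomains) for its traces and normal stresses on $\Gamma$ to be well defined. Thus it is single-valued across $\Gamma$: $[\bfu]=\bfzero$, $[D\bfu]=\bfzero$ and $[\bfn\cdot\bfu]=0$, while $\langle\,\cdot\,\rangle$ acts as the identity on single-valued traces, e.g.\ $\langle p\rangle=p$ and $\langle (D\bfu)\cdot\bfn\rangle=(D\bfu)\cdot\bfn$. Plugging this into \eqref{eq:ah}--\eqref{eq:dh} gives three simplifications: the overlap stabilization $a_{h,O}(\bfu,\bfv)=([D\bfu],[D\bfv])_{\Omega_{h,0}\cap\Omega_1}$ vanishes; the two $[\bfu]$-terms in $a_{h,N}$ drop; and $b_h(\bfu,q)=0$ since $\divv\bfu=0$ by \eqref{eq:stokes2} and $[\bfn\cdot\bfu]=0$. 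Moreover, by the momentum equation \eqref{eq:stokes1} one has $\Delta\bfu-\nabla p=-\bff$, so $d_h((\bfu,p),(\bfv,q))=-h^2(\bff,\Delta\bfv+\nabla q)_{\Omega_{h,0}\setminus\omega_{h,0}}$, which is exactly the second term of $l_h(\bfv)$. It therefore remains to establish $a_{h,N}(\bfu,\bfv)+b_h(\bfv,p)=(\bff,\bfv)_\Omega$.

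The core step is a subdomain-wise integration by parts. On each $\Omega_i$, Green's formula yields $(D\bfu,D\bfv)_{\Omega_i}=(-\Delta\bfu,\bfv)_{\Omega_i}+((D\bfu)\cdot\bfn_i,\bfv)_{\partial\Omega_i}$ and $-(\divv\bfv,p)_{\Omega_i}=(\nabla p,\bfv)_{\Omega_i}-(p\,\bfn_i,\bfv)_{\partial\Omega_i}$, where $\bfn_i$ is the outward normal of $\Omega_i$. Adding and using $-\Delta\bfu+\nabla p=\bff$ converts the volume contribution into $(\bff,\bfv)_{\Omega_i}$ and leaves the traction $(\bfsigma\cdot\bfn_i,\bfv)_{\partial\Omega_i}$ with $\bfsigma=D\bfu-p\bfI$. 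The parts of $\partial\Omega_i$ lying on $\partial\Omega$ contribute nothing, since the velocity fields vanish there, so only $\Gamma$ survives. On $\Gamma$ the outward normals are opposite, $\bfn_1=\bfn$ and $\bfn_0=-\bfn$, so the two tractions assemble into $(\bfsigma\cdot\bfn,[\bfv])_\Gamma$, and expanding with the single-valuedness above gives $(\bfsigma\cdot\bfn,[\bfv])_\Gamma=(\langle(D\bfu)\cdot\bfn\rangle,[\bfv])_\Gamma-(\langle p\rangle,[\bfn\cdot\bfv])_\Gamma$. These are precisely the terms that cancel the surviving Nitsche flux $-(\langle(D\bfu)\cdot\bfn\rangle,[\bfv])_\Gamma$ in $a_{h,N}$ and the interface term $([\bfn\cdot\bfv],\langle p\rangle)_\Gamma$ in $b_h(\bfv,p)$. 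What is left is $a_{h,N}(\bfu,\bfv)+b_h(\bfv,p)=(\bff,\bfv)_\Omega$, which completes the consistency identity and hence the proof.

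I expect the interface bookkeeping to be the only delicate point: one must track the orientation of the two outward normals so that the tractions combine into the jump $[\bfv]$ with the correct signs, and must know a priori that $(\bfu,p)$ is regular enough for the normal stress $\bfsigma\cdot\bfn$ to have a well-defined trace on $\Gamma$. Everything else is bilinearity of $A_h$ together with the defining relation \eqref{eq:fem}.
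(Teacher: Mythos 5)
Your proof is correct and is essentially the paper's argument spelled out in full: the paper likewise reduces the claim to consistency plus linearity of $A_h$, citing \cite{HanHanLar} for the subdomain-wise integration by parts that you carry out explicitly, and noting that the stabilization terms do not disturb consistency. If anything, your bookkeeping for $d_h$ is more precise than the paper's, which asserts $d_h((\bfu,p),(\bfv_h,q_h))=0$, whereas by \eqref{eq:stokes1} it equals $-h^2(\bff,\Delta \bfv_h+\nabla q_h)_{\Omega_{h,0}\setminus\omega_{h,0}}$ and, as you correctly observe, cancels exactly against the corresponding term in $l_h$.
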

\begin{proof}
  The result follows from \cite{HanHanLar} and noting that
  $a_{h,O}(\bfu,\bfv_h)=d_h((\bfu,p),(\bfv_h,q_h))=0$ for all
  $(\bfv_h,q_h) \in \bfV_h \times Q_h$.
\end{proof}

\subsubsection{Approximation properties}

We assume that there is an interpolation operator
$\bfpi_{h,i}: \bfV_{i}(\Omega_{h,i}) \rightarrow \bfV_{h,i},$ for
$i=0,1$, where $\bfV_{i}(\Omega_{h,i}) \subset [L^2(\Omega_{h,i})]^d$
is a space of sufficient regularity to define the interpolant. For
Taylor-Hood elements, we take $\bfpi_{h,i}$ to be the Scott-Zhang
interpolation operator~\cite{ScottZhang}, and
$\bfV_{i}(\Omega_{h,i}) = [L^2(\Omega_{h,i})]^d$. For other elements
we refer to their corresponding papers, for example the
Crouzeix-Raviart element \cite{CR1973}, the Mini element
\cite{mini}, or the overviews in \cite{boffi} or \cite{braess}.

The full interpolation operator $\bfpi_h: \bfV \rightarrow \bfV_h$ can
now be defined by the use of a linear extension operator
$\msE: [H^s(\omega_{h,0})]^d \rightarrow [H^s(\Omega_{h,0})]^d$,
$s\geq 0$, such that $(\msE \bfv)|_{\omega_{h,0}} = \bfv$ and
\begin{align}
  \| \msE \bfv \|_{H^s(\Omega_{h,0})} \lesssim \| \bfv \|_{H^s(\omega_{h,0})}.
\end{align}
Now, $\bfpi_h: \bfV \rightarrow \bfV_h$ is defined by
\begin{align}
  \bfpi_h \bfv = \bfpi_{h,0} \msE \bfv_0  \oplus \bfpi_{h,1} \bfv_1.
\end{align}
A similar argument can be made to define the pressure interpolation
operator $\pi_h: Q \rightarrow Q_h$.

Furthermore, we assume that the following standard interpolation
estimate holds:
\begin{equation}\label{eq:interpol}
  \|v - \pi_h v\|_{H^m(K)} \lesssim h^{k+1-m} | v |_{H^{k+1}(\widetilde{K})},
  \quad m=0,1, \ldots, k.
\end{equation}
Here, in the case of a Scott-Zhang interpolation operator,
$\widetilde{K}$ is the patch of elements neighboring $K$.

\subsection{Stability and Convergence}

In this section, we prove that the finite element method proposed in
\eqref{eq:fem} is stable. This is done by first proving the coercivity
and continuity of $a_h$ defined in~\eqref{eq:ah}, followed by proving
that $b_h$ defined in~\eqref{eq:bh} satisfies the inf-sup
condition. Combining these results proves stability of $A_h$. This
strategy is similar to what can be found in \cite{MassingStokes} and
\cite{ZahediP1isoP2}. In particular, Verf\"urth's trick
\cite{verfurth} is used to prove inf-sup stability.  For a general
overview of the saddle point theory used, see \cite{boffi, braess,
  BreFor}. We conclude the section by proving an \emph{a~priori} error
estimate. Before we begin, we state appropriate norms.

\subsubsection{Norms}
In the analysis that follows, we shall use the following norms:
\begin{align}
  \label{eq:energynorm}
  \tn \bfv \tn^2_h &=
  \sum_{i=0}^1 \| D \bfv_i \|^2_{\Omega_{h,i}}
  +  h \| \langle D \bfv \rangle \cdot \bfn \|^2_{\Gamma}
  +  h^{-1} \| [\bfv] \|^2_{\Gamma},
  \quad \bfv \in \bfV_h,
  \\
  \|q\|^2_h &= \sum_{i=0}^1 \| q_i \|^2_{\Omega_{h,i}},
  \quad q \in Q_h,
  \\
  \label{eq:triplenorm}
  \tn (\bfv,q) \tn^2_h &= \tn \bfv \tn^2_h + \| q \|^2_h,
  \quad (\bfv,q) \in \bfV_h \times Q_h.
\end{align}

\subsubsection{Interpolation estimates}

Using \eqref{eq:interpol} together with the trace inequality
$\| v \|^2_{\Gamma \cap K} \lesssim h^{-1} \| v \|^2_K + h \| \nabla v
\|^2_K$,
we obtain the following interpolation estimate for $\bfv \in \bfV$:
\begin{align}
  \label{eq:interpolu}
  \tn \bfv - \bfpi_h \bfv \tn_h &\lesssim h^k | \bfv |_{H^{k+1}(\Omega)}.
\end{align}
See \cite{HanHanLar} for a proof. For the pressure $p$, we have
\begin{equation}
  \|p - \pi_h p\|_h \lesssim h^{l+1} | \bfv |_{H^{l+1}(\Omega)}.
\end{equation}

\subsubsection{Coercivity and continuity}

Establishing coercivity and continuity of $a_h$ is straightforward and
similar to \cite{HanHanLar}.

%% We will in this section establish the stability properties of the
%% proposed finite element method. This will be done by first showing
%% that $a_h$ \eqref{eq:ah} is coercive and continuous. This will then be
%% used to show that the full form $A_h$ \eqref{eq:fem} satisfies the
%% inf-sup condition.

\begin{lem}[Coercivity of $a_h$]
  \label{lem:ahcoer}
  The bilinear form $a_h$ \eqref{eq:ah} is coercive:
  \begin{equation}\label{eq:ahcoercivity}
    \tn \bfv \tn_h ^2 \lesssim a_h(\bfv,\bfv) \quad
    \Forall \bfv \in \bfV_h.
  \end{equation}
\end{lem}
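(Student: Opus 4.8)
The plan is to expand the diagonal of $a_h$, bound the Nitsche consistency term by Cauchy--Schwarz and Young's inequality, and then absorb the resulting interface contribution into the bulk gradient terms by means of inverse (trace) inequalities; the overlap term $a_{h,O}$ will supply control of $D\bfv_0$ on the sliver $\Omega_{h,0}\cap\Omega_1$ that the energy norm sees but the volume terms of $a_{h,N}$ do not. Concretely, evaluating \eqref{eq:ah} on the diagonal gives
\[
  a_h(\bfv,\bfv) = \|D\bfv_0\|^2_{\Omega_0} + \|D\bfv_1\|^2_{\Omega_1}
  - 2(\langle (D\bfv)\cdot\bfn\rangle,[\bfv])_\Gamma
  + \beta h^{-1}\|[\bfv]\|^2_\Gamma
  + \|[D\bfv]\|^2_{\Omega_{h,0}\cap\Omega_1}.
\]
For the cross term I would write $\|[\bfv]\|_\Gamma = h^{1/2}\cdot h^{-1/2}\|[\bfv]\|_\Gamma$ and apply Young's inequality with a small parameter $\delta>0$, so that
\[
  2|(\langle (D\bfv)\cdot\bfn\rangle,[\bfv])_\Gamma|
  \le \delta\, h\,\|\langle (D\bfv)\cdot\bfn\rangle\|^2_\Gamma
  + \delta^{-1} h^{-1}\|[\bfv]\|^2_\Gamma .
\]

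The decisive step is to bound $h\|\langle (D\bfv)\cdot\bfn\rangle\|^2_\Gamma$ by the bulk gradient energy. Since $\langle\cdot\rangle$ averages the two one-sided traces, it suffices to treat each side. For $\bfv_1$ the interface $\Gamma = \partial\Omega_1\setminus\partial\Omega$ is aligned with $\mcK_{h,1}$, so the standard discrete trace inequality on the elements of $\mcK_{h,1}$ touching $\Gamma$ gives $h\|(D\bfv_1)\cdot\bfn\|^2_\Gamma \lesssim \|D\bfv_1\|^2_{\Omega_1}$. For $\bfv_0$ the interface cuts arbitrarily through the elements of $\mcK_{h,0}$, and here I would invoke the cut-element inverse trace inequality $h\|(D\bfv_0)\cdot\bfn\|^2_{\Gamma\cap K}\lesssim \|D\bfv_0\|^2_K$, valid uniformly over shape-regular elements $K$ by Assumption~\ref{assumptionA} (polynomial spaces); summing over the cut elements gives $h\|(D\bfv_0)\cdot\bfn\|^2_\Gamma \lesssim \|D\bfv_0\|^2_{\Omega_{h,0}}$. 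I expect this cut inverse estimate to be the main obstacle: its constant must not degenerate as $\Gamma$ slices an element into a vanishingly thin piece, which is precisely why the argument relies on quasi-uniformity and on the finite-dimensionality of the local polynomial spaces rather than on how $\Gamma$ meets $K$.

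Combining both sides gives $h\|\langle (D\bfv)\cdot\bfn\rangle\|^2_\Gamma \lesssim \|D\bfv_0\|^2_{\Omega_{h,0}} + \|D\bfv_1\|^2_{\Omega_1}$, and the right-hand side contains $\|D\bfv_0\|^2_{\Omega_{h,0}} = \|D\bfv_0\|^2_{\Omega_0} + \|D\bfv_0\|^2_{\Omega_{h,0}\cap\Omega_1}$, so the piece over $\Omega_{h,0}\cap\Omega_1 = \Omega_{h,0}\setminus\Omega_0$ is not yet controlled by the volume terms of $a_{h,N}$. This is where $a_{h,O}$ enters: by the triangle inequality $\|D\bfv_0\|_{\Omega_{h,0}\cap\Omega_1} \le \|[D\bfv]\|_{\Omega_{h,0}\cap\Omega_1} + \|D\bfv_1\|_{\Omega_{h,0}\cap\Omega_1}$, and since $\Omega_{h,0}\cap\Omega_1\subset\Omega_1$ this yields $\|D\bfv_0\|^2_{\Omega_{h,0}\cap\Omega_1}\lesssim a_{h,O}(\bfv,\bfv) + \|D\bfv_1\|^2_{\Omega_1}$.

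Assembling these bounds, I would fix $\delta$ small enough that the negative contributions $-\delta C\,(\|D\bfv_0\|^2_{\Omega_{h,0}} + \|D\bfv_1\|^2_{\Omega_1})$ are absorbed by a fixed fraction of the positive volume and overlap terms, and then take $\beta$ large enough (scaling as $k^2$, matching the growth of the inverse-inequality constant with the polynomial degree) that $(\beta-\delta^{-1})h^{-1}\|[\bfv]\|^2_\Gamma$ retains a positive multiple of $h^{-1}\|[\bfv]\|^2_\Gamma$. This gives $a_h(\bfv,\bfv)\gtrsim \|D\bfv_0\|^2_{\Omega_0}+\|D\bfv_1\|^2_{\Omega_1}+\|[D\bfv]\|^2_{\Omega_{h,0}\cap\Omega_1}+h^{-1}\|[\bfv]\|^2_\Gamma$. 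Finally I recover the remaining two terms of the energy norm from this lower bound: $\|D\bfv_0\|^2_{\Omega_{h,0}}\lesssim \|D\bfv_0\|^2_{\Omega_0}+a_{h,O}(\bfv,\bfv)+\|D\bfv_1\|^2_{\Omega_1}$ as above, and $h\|\langle D\bfv\rangle\cdot\bfn\|^2_\Gamma\lesssim \|D\bfv_0\|^2_{\Omega_{h,0}}+\|D\bfv_1\|^2_{\Omega_1}$ by the same inverse inequality, which together complete $\tn\bfv\tn^2_h\lesssim a_h(\bfv,\bfv)$.
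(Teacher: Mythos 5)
Your proposal is correct and follows essentially the same route as the paper: the paper's proof likewise rests on exactly your two key ingredients, namely the cut-position-independent inverse inequality $h\|(D\bfv)\cdot\bfn\|^2_{K\cap\Gamma}\lesssim\|D\bfv\|^2_K$ and the triangle-inequality control $\|D\bfv_0\|^2_{\Omega_{h,0}\cap\Omega_1}\lesssim a_{h,O}(\bfv,\bfv)+\|D\bfv_1\|^2_{\Omega_1}$ supplied by the overlap term. The only difference is presentational: the paper cites the standard Nitsche coercivity argument from the literature, whereas you carry out the Cauchy--Schwarz/Young absorption and the choice of $\delta$ and $\beta$ explicitly.
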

\begin{proof}
  Note that the overlap term $a_{h,O}$ provides the control
  \begin{align}
    \sum_{i=0}^1 \|D \bfv \|^2_{\Omega_{h,i}}
    &=
    %\| D \bfv_0 \|^2_{\Omega_{h,0}} + \| D \bfv_1 \|^2_{\Omega_1} \\
    %&\lesssim \| D \bfv_0 \|^2_{\Omega_{h,0}} + \| D \bfv_1 \|^2_{\Omega_1} \\
    %&=
    \| D \bfv_0 \|^2_{\Omega_{h,0}\setminus \Omega_1}
    +  \| D \bfv_0 \|^2_{\Omega_{h,0}\cap \Omega_1}  + \| D \bfv_1 \|^2_{\Omega_{h,1}}
    \\
    &\lesssim
    \| D \bfv_0 \|^2_{\Omega_{h,0} \setminus \Omega_1}
    +  \| D (\bfv_0-\bfv_1)\|^2_{\Omega_{h,0}\cap \Omega_1}
    + \| D \bfv_1 \|^2_{\Omega_1}
    \\
    %&\lesssim
    %\| D \bfv_0 \|^2_{\Omega_{0}}
    %+  \| D (\bfv_0-\bfv_1)\|^2_{\Omega_{h,0}\cap \Omega_1}
    %+ \| D \bfv_1 \|^2_{\Omega_1}
    %\\
    &\leq \sum_{i=0}^1 \|D \bfv \|^2_{\Omega_{i}} + a_{h,O}(\bfv,\bfv),
  \end{align}
  where we have used that $\Omega_{h,0} \setminus \Omega_1 = \Omega_0$
  and $\Omega_{h,0}\cap \Omega_1\subset \Omega_1$ as described in the
  section on the composite mesh above. We also note that for each
  element $K$ that intersects an interface segment $\Gamma$ we have
  the inverse bound
  \begin{equation}\label{eq:inverseedge}
    h\| (D \bfv)\cdot \bfn \|_{K\cap \Gamma}^2 \lesssim
    \| D \bfv \|_K^2
  \end{equation}
  independent of the particular position of the intersection between
  $K$ and $\Gamma$ (see \cite{HanHanLar}). Combining these two
  estimates with the standard approach to establish coercivity of a
  Nitsche method (see for example \cite{HanHanLar}) immediately gives
  the desired estimate.
\end{proof}

\begin{lem}[Continuity of $a_h$]
  \label{lem:ahcont}
  The bilinear form $a_h$ \eqref{eq:ah} is continuous:
  \begin{equation}\label{eq:ahcont}
    a_h(\bfv,\bfw) \lesssim \tn \bfv \tn_h \tn \bfw \tn_h
    \quad
    \Forall \bfv,\bfw \in \bfV_h.
  \end{equation}
\end{lem}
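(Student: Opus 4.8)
The plan is to prove continuity by estimating each of the bilinear contributions making up $a_h$ in \eqref{eq:ah} separately with the Cauchy–Schwarz inequality, and then matching each resulting factor to one of the four pieces of the energy norm \eqref{eq:energynorm}. Writing $a_h = a_{h,N} + a_{h,O}$, this splits the lemma into six elementary estimates: the two bulk gradient products $(D\bfv,D\bfw)_{\Omega_i}$, the two symmetric Nitsche consistency terms on $\Gamma$, the penalty term, and the overlap term on $\Omega_{h,0}\cap\Omega_1$.

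For the bulk terms, Cauchy–Schwarz gives $(D\bfv,D\bfw)_{\Omega_i} \leq \|D\bfv_i\|_{\Omega_i}\|D\bfw_i\|_{\Omega_i}$, and since $\Omega_i \subset \Omega_{h,i}$ each factor is dominated by the term $\|D\bfv_i\|_{\Omega_{h,i}}$ appearing in $\tn\cdot\tn_h$. The overlap term is handled by first writing $[D\bfv] = D\bfv_1 - D\bfv_0$, so that the triangle inequality followed by Cauchy–Schwarz gives $a_{h,O}(\bfv,\bfw) \lesssim \big(\|D\bfv_0\|_{\Omega_{h,0}\cap\Omega_1} + \|D\bfv_1\|_{\Omega_{h,0}\cap\Omega_1}\big)\big(\|D\bfw_0\|_{\Omega_{h,0}\cap\Omega_1} + \|D\bfw_1\|_{\Omega_{h,0}\cap\Omega_1}\big)$; using $\Omega_{h,0}\cap\Omega_1 \subset \Omega_{h,0}$ and $\Omega_{h,0}\cap\Omega_1 \subset \Omega_1 = \Omega_{h,1}$, all four factors are again controlled by the gradient pieces of the norm.

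The interface terms are where the $h$-weights must be distributed with care. For each consistency term I would apply Cauchy–Schwarz on $\Gamma$ and then split the scaling symmetrically, writing for instance $(\langle(D\bfv)\cdot\bfn\rangle,[\bfw])_\Gamma \leq \big(h^{1/2}\|\langle D\bfv\rangle\cdot\bfn\|_\Gamma\big)\big(h^{-1/2}\|[\bfw]\|_\Gamma\big)$, so that the first factor matches the norm term $h\|\langle D\bfv\rangle\cdot\bfn\|^2_\Gamma$ and the second matches $h^{-1}\|[\bfw]\|^2_\Gamma$; these are bounded by $\tn\bfv\tn_h$ and $\tn\bfw\tn_h$ respectively, and the second consistency term is symmetric. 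The penalty term $\beta h^{-1}([\bfv],[\bfw])_\Gamma$ is treated the same way, distributing $h^{-1}$ as $h^{-1/2}\cdot h^{-1/2}$ and absorbing $\beta$ into the hidden constant. Collecting the six bounds through a discrete Cauchy–Schwarz inequality over the finite list of terms then yields $a_h(\bfv,\bfw)\lesssim\tn\bfv\tn_h\tn\bfw\tn_h$.

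I expect no genuine obstacle: this is the easy direction of the stability analysis, and the estimate is essentially immediate once the energy norm is chosen to contain the weighted average-derivative term $h\|\langle D\bfv\rangle\cdot\bfn\|^2_\Gamma$. That is precisely why this term is included in \eqref{eq:energynorm} — it lets the Nitsche consistency terms be bounded by direct Cauchy–Schwarz, with no appeal to the trace/inverse inequality \eqref{eq:inverseedge} that was needed for coercivity. The only points requiring minor care are the bookkeeping of the $h$-powers so that each Cauchy–Schwarz factor lands in the norm, and the fact that the constant depends on $\beta$ (hence on the polynomial degree $k$), which is harmless for the $\lesssim$ statement.
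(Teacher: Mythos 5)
Your proposal is correct and follows essentially the same route as the paper: the paper simply delegates the $a_{h,N}$ part to \cite{HanHanLar}, where the argument is precisely your weighted Cauchy--Schwarz splitting (made immediate because the energy norm \eqref{eq:energynorm} carries the term $h\|\langle D\bfv\rangle\cdot\bfn\|^2_\Gamma$, so no inverse inequality is needed on the discrete space), and it bounds $a_{h,O}$ by Cauchy--Schwarz together with $\|D\bfw\|\lesssim\tn\bfw\tn_h$, exactly as you do via $\Omega_{h,0}\cap\Omega_1\subset\Omega_{h,0}$ and $\Omega_{h,0}\cap\Omega_1\subset\Omega_1=\Omega_{h,1}$. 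The only difference is presentational: you spell out the standard estimates that the paper handles by citation.
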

\begin{proof}
  A proof in absence of $a_{h,O}$ is found in
  \cite{HanHanLar}. Bounding $a_{h,O}$ is straightforward using the
  Cauchy-Schwarz inequality and the fact that $\| D \bfw \| \lesssim
  \tn \bfw \tn_h$ for any $\bfw \in \bfV_h$.
\end{proof}

\subsubsection{Stability}

Showing stability of the proposed finite element method involves
several steps. First we show a preliminary stability estimate for
$A_h$ \eqref{eq:fem}. Then the so called small inf-sup condition for
$b_h$ \eqref{eq:bh} is shown using a decomposition of the pressure
space into $L^2$ orthogonal components. For each of these components
we show that an inf-sup condition holds. This is then used to show the
big inf-sup condition for $A_h$.

\begin{lem}[Preliminary stability estimate for $A_h$]
  \label{lem:infsuptestu}
  It holds
  \begin{align}
    \tn \bfu \tn^2_h + h^2
    \|\nabla p\|_{\Omega_{h,0}\setminus \omega_{h,0}}^2
    &\lesssim
    \tn \bfu \tn^2_h
    + h^2 \|\Delta \bfu - \nabla p\|_{\Omega_{h,0}\setminus \omega_{h,0}}^2\\
    &\lesssim A_h((\bfu,p),(\bfu,-p)).
  \end{align}
\end{lem}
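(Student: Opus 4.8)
The plan is to test the finite element form $A_h$ against the specific pair $(\bfv,q)=(\bfu,-p)$ and exploit the resulting cancellation. Expanding the definition~\eqref{eq:Ah}, the two mixed terms $b_h(\bfu,-p)$ and $b_h(\bfu,p)$ cancel by linearity of $b_h$ in its second argument, leaving
\begin{equation*}
  A_h((\bfu,p),(\bfu,-p)) = a_h(\bfu,\bfu) + d_h((\bfu,p),(\bfu,-p)).
\end{equation*}
Inserting $(\bfv,q)=(\bfu,-p)$ into the definition~\eqref{eq:dh} of $d_h$ gives $\Delta \bfv + \nabla q = \Delta \bfu - \nabla p$, turning the stabilization term into a perfect square,
\begin{equation*}
  d_h((\bfu,p),(\bfu,-p)) = h^2 \|\Delta \bfu - \nabla p\|^2_{\Omega_{h,0}\setminus \omega_{h,0}}.
\end{equation*}
The second (rightmost) inequality then follows at once by invoking the coercivity of $a_h$ from Lemma~\ref{lem:ahcoer}, which gives $\tn \bfu \tn_h^2 \lesssim a_h(\bfu,\bfu)$; adding the identical $h^2$-weighted residual term to both sides yields the middle quantity bounded by $A_h((\bfu,p),(\bfu,-p))$.

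For the first inequality I would control $h^2\|\nabla p\|^2$ on $\Omega_{h,0}\setminus \omega_{h,0}$ by splitting off the residual via the triangle inequality,
\begin{equation*}
  \|\nabla p\|_{\Omega_{h,0}\setminus \omega_{h,0}} \leq \|\Delta \bfu\|_{\Omega_{h,0}\setminus \omega_{h,0}} + \|\Delta \bfu - \nabla p\|_{\Omega_{h,0}\setminus \omega_{h,0}},
\end{equation*}
so that it only remains to bound $h^2\|\Delta \bfu\|^2$ by the energy norm. By Assumption~\ref{assumptionB}(a) the region $\Omega_{h,0}\setminus \omega_{h,0}$ is a union of whole (uncut) elements of $\mcK_{h,0}$, and by Assumption~\ref{assumptionA} the velocity is a piecewise polynomial of bounded degree, so a standard elementwise inverse estimate on the quasi-uniform mesh gives $h^2\|\Delta \bfu\|_K^2 \lesssim \|D\bfu\|_K^2$ on each such $K$. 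Summing over these elements and enlarging the integration domain to the full patch yields
\begin{equation*}
  h^2\|\Delta \bfu\|^2_{\Omega_{h,0}\setminus \omega_{h,0}} \lesssim \|D\bfu_0\|^2_{\Omega_{h,0}} \leq \tn \bfu \tn^2_h.
\end{equation*}
Combining this with the triangle-inequality split and then adding $\tn \bfu \tn^2_h$ to both sides establishes the first inequality.

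The evaluation of $A_h((\bfu,p),(\bfu,-p))$ and the coercivity step are essentially bookkeeping; the only place requiring genuine care is the inverse-inequality estimate. The hard part will be justifying the second-order bound $h^2\|\Delta \bfu\|_K^2 \lesssim \|D\bfu\|_K^2$ on the cut background mesh: this relies on $\Omega_{h,0}\setminus \omega_{h,0}$ consisting of full elements so that the polynomial inverse estimate applies on undistorted reference elements, and on quasi-uniformity to convert the scaling $\|\nabla (D\bfu)\|_K \lesssim h^{-1}\|D\bfu\|_K$ into the stated $h^2$ weight. Since $\Delta \bfu$ and $\nabla p$ on this region are the background components $\bfu_0$ and $p_0$, no interface or overlap contributions enter, and the estimate localizes cleanly to $\mcK_{h,0}$.
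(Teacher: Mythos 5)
Your proposal is correct and takes essentially the same route as the paper's proof: the $b_h$-terms cancel so that $A_h((\bfu,p),(\bfu,-p)) = a_h(\bfu,\bfu) + h^2\|\Delta\bfu - \nabla p\|^2_{\Omega_{h,0}\setminus\omega_{h,0}}$, the second inequality follows from coercivity of $a_h$ (Lemma~\ref{lem:ahcoer}), and the first from adding and subtracting $\Delta\bfu$ together with the elementwise inverse estimate \eqref{eq:inverse} giving $h^2\|\Delta\bfu\|^2_K \lesssim \|D\bfu\|^2_K$. Your additional observation that $\Omega_{h,0}\setminus\omega_{h,0}$ is a union of whole elements of $\mcK_{h,0}$ (via Assumption~\ref{assumptionB}(a)), so the inverse estimate localizes cleanly, is left implicit in the paper's elementwise summation but is a valid and welcome clarification.
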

\begin{proof}
  Recall the inverse estimate
  \begin{align}
    \label{eq:inverse}
    \| \bfv \|_{H^l(K)} \leq C h^{m-l} \| \bfv \|_{H^m(K)}
  \end{align}
  (see \cite{BreSco08}, Section 4.5) where $K\in \mcK_{h,i}$ and
  $\bfv\in \bfV_h$.  The first estimate in the lemma follows by adding
  and subtracting $\Delta \bfu$, using the triangle inequality and
  \eqref{eq:inverse} as follows:
  \begin{align}
    h^2 \| \nabla p \|^2_K
    &\leq
    h^2\| \nabla p - \Delta \bfu \|^2_K
    +
    h^2\| \Delta \bfu \|^2_K
    \\
    &\lesssim
    h^2\| \nabla p - \Delta \bfu \|^2_K
    +
    \| D \bfu \|^2_K,
  \end{align}
  for each element $K\in \mcK_{h,i}$. The second estimate follows
  immediately using coercivity \eqref{eq:ahcoercivity} since
  \begin{align}
    A_h((\bfu,p),(\bfu,-p))
    &=
    a_h (\bfu,\bfu)
    +
    d_h ((\bfu,p),(\bfu,-p))
    \\
    &= a_h (\bfu,\bfu)  + h^2\| \nabla p - \Delta \bfu \|^2_{\Omega_{h,0}\setminus \omega_{h,0}}
    \\
    &\gtrsim \tn \bfu \tn_h^2 + h^2\| \nabla p - \Delta \bfu \|^2_{\Omega_{h,0}\setminus \omega_{h,0}}.
  \end{align}
\end{proof}

The pressure space can be written as the following $L^2$-orthogonal
decomposition:
\begin{equation}
  Q = Q_c \oplus Q_0 \oplus Q_1,
\end{equation}
where $Q_c$ is the space of piecewise constant functions on the
partition $\{\Omega_i\}_{i=0}^1$ of $\Omega$ with average zero over
$\Omega$ and $Q_i$ is the space of $L^2$ functions with average zero
over $\Omega_i$. We next show inf-sup conditions for $Q_c$ and
$Q_0$. Recall that the inf-sup condition for $Q_1$ is already
established by Assumption~\ref{assumptionB}.

\begin{lem}[Inf-sup for $Q_c$] \label{lem:infsupconstants}
  For each $q \in Q_c$ there exists a $\bfw_c \in \bfV_h$ with
  $\tn \bfw_c \tn_h = \|q \|_h$ such that
  \begin{equation}
    \| q \|_h^2 \lesssim b_h(\bfw_c,q),
  \end{equation}
  where the bound is uniform w.r.t.~$q$.
\end{lem}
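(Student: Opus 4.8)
\emph{Proof proposal.} The plan is to exploit that $Q_c$ is one-dimensional. Write $q = c_0$ on $\Omega_0$ and $q = c_1$ on $\Omega_1$ with $c_0|\Omega_0| + c_1|\Omega_1| = 0$, and identify $q$ with the broken function equal to $c_i$ on $\Omega_{h,i}$, so that $\|q\|_h^2 = c_0^2|\Omega_{h,0}| + c_1^2|\Omega_1|$. Since $|\Omega_0| \le |\Omega_{h,0}| \le |\Omega_0| + Ch$ by Assumption~\ref{assumptionB}(c), the quantities $\|q\|_h$, $\|q\|_\Omega$ and $|[q]|$, with $[q] = c_1 - c_0$, are all equivalent with constants depending only on the fixed geometry (indeed $[q]^2 = \|q\|_\Omega^2(|\Omega_0|^{-1}+|\Omega_1|^{-1})$). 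Hence it suffices to construct a single $\bfw_c \in \bfV_h$ with $b_h(\bfw_c,q) \gtrsim [q]^2$ and $\tn \bfw_c \tn_h \lesssim |[q]|$; uniformity in $q$ is then automatic by linearity, and a final rescaling by a fixed constant yields $\tn \bfw_c \tn_h = \|q\|_h$ while preserving $\|q\|_h^2 \lesssim b_h(\bfw_c,q)$.

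First I would record the algebraic identity obtained by integrating the two volume divergence terms of $b_h$ by parts on $\Omega_0$ and $\Omega_1$ and recombining with the interface term. Since $q$ is piecewise constant, for any $\bfw = (\bfw_0,\bfw_1) \in \bfV_h$ that vanishes on the elements meeting $\partial\Omega$, the boundary contributions on $\partial\Omega_i \cap \partial\Omega$ drop out and the two fluxes $\int_\Gamma \bfw_i\cdot\bfn$ combine with $([\bfn\cdot\bfw],\langle q\rangle)_\Gamma$ into
\begin{equation}
  b_h(\bfw,q) = -[q]\int_\Gamma \langle \bfw \rangle \cdot \bfn \, \ds .
\end{equation}
This reduces the lemma to producing a discrete field whose \emph{average} normal flux through $\Gamma$ is comparable to $-[q]$.

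The construction uses the set $U$ from Section~\ref{sect:meshstuff}. Fix once and for all a smooth field $\bfv_0 \in [H^1_0(\Omega)]^d$ with support in a ball around an interior point of $\Gamma$ that is compactly contained in $U \cap \Omega$ (hence bounded away from $\partial\Omega$), normalized so that $\int_\Gamma \bfv_0 \cdot \bfn \, \ds = 1$; this is possible because $\Gamma \cap U$ carries positive surface measure. Set $\bfv = -[q]\,\bfv_0$ and $\bfw_c = \bfpi_h \bfv \in \bfV_h$. For $h \le \bar h$ the interpolation patches of $\operatorname{supp}\bfv$ stay away from $\partial\Omega$, so $\bfw_c$ vanishes near $\partial\Omega$ and the identity above applies. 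Since $\bfv$ is continuous, $\langle \bfw_c \rangle = \bfv + \langle \bfw_c - \bfv \rangle$ on $\Gamma$, whence
\begin{equation}
  \int_\Gamma \langle \bfw_c \rangle \cdot \bfn \, \ds = -[q] + \int_\Gamma \langle \bfw_c - \bfv \rangle \cdot \bfn \, \ds ,
\end{equation}
and therefore $b_h(\bfw_c,q) = [q]^2 - [q]\int_\Gamma \langle \bfw_c - \bfv \rangle \cdot \bfn \, \ds$. Note the sign of $[q]$ never enters, as it is absorbed into $\bfv$.

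It then remains to estimate the errors. Combining the interpolation estimate \eqref{eq:interpol} with the trace inequality exactly as in \eqref{eq:interpolu} gives $\|\bfw_c - \bfv\|_\Gamma \lesssim h^{k+1/2} |\bfv|_{H^{k+1}(\Omega)} \lesssim h^{k+1/2}|[q]|$, so the flux error is $O(h^{k+1/2})|[q]|$ and its contribution to $b_h(\bfw_c,q)$ is $O(h^{k+1/2})[q]^2$, which is higher order and absorbed for $\bar h$ small, yielding $b_h(\bfw_c,q) \gtrsim [q]^2 \gtrsim \|q\|_h^2$. For the norm, $\tn \bfw_c \tn_h \le \tn \bfv \tn_h + \tn \bfv - \bfpi_h \bfv \tn_h \lesssim |\bfv|_{H^1(\Omega)} + h^k |\bfv|_{H^{k+1}(\Omega)} \lesssim |[q]| \sim \|q\|_h$, the interface terms of $\tn\cdot\tn_h$ being controlled by the smoothness of $\bfv$ (via the trace inequality) and by \eqref{eq:interpolu}. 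The main obstacle I anticipate is not any single estimate but making the reduction rigorous: verifying the boundary-flux identity, namely the disappearance of the $\partial\Omega$ terms and the exact recombination of the $\Gamma$ terms, and confirming that the equivalence $\|q\|_h \sim |[q]|$ together with all error constants are genuinely $h$-uniform, so that the normalized $\bfv_0$ can be chosen independently of both $h$ and $q$.
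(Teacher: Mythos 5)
Your proposal is correct and takes essentially the same route as the paper's own proof: you exploit that $Q_c$ is one-dimensional, reduce $b_h(\cdot,q)$ via integration by parts to an average normal flux through $\Gamma$, realize that flux with a smooth bump field supported in a ball inside $U$ meeting the interface, interpolate it, and absorb the interpolation perturbation (your $O(h^{k+1/2})$ bound versus the paper's $O(h)$) for $\bar{h}$ small, finishing with the same rescaling argument. The remaining differences are cosmetic: the paper works with the basis function $\chi$ and localizes to $\gamma = \Gamma \cap B_R(\bfx_0)$ with constant normal, while you normalize a $q$-independent field $\bfv_0$ to unit flux over all of $\Gamma$ and scale by $-[q]$.
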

\begin{proof}
  We first note that $Q_c$ is a one-dimensional vector space spanned
  by
  \begin{equation}
    \label{eq:chi}
    \chi =
    \begin{cases}
      |\Omega_0|^{-1} & \text{in $\Omega_0$},
      \\
      -|\Omega_1|^{-1} &
      \text{in $\Omega_1$}.
    \end{cases}
  \end{equation}
  Second, we note that since $\Omega_0$ and $\Omega_1$ are
  nonempty, there exists a nonempty open set $U \subset \Omega$ such
  that $\Gamma \cap U \neq \emptyset$.  Let now
  $\bfx_0 \in \Gamma \cap U$ be a point on the interface $\Gamma$ and
  let $B_R(\bfx_0)$ be a ball of radius $R$ centered at $x_0$ as in
  Figure~\ref{fig:ball}. The radius $R$ is chosen such that
  $B_R(\bfx_0) \subset U$ independently of the mesh size $h$.
  \begin{figure}
    \includegraphics[width=0.3\textwidth]{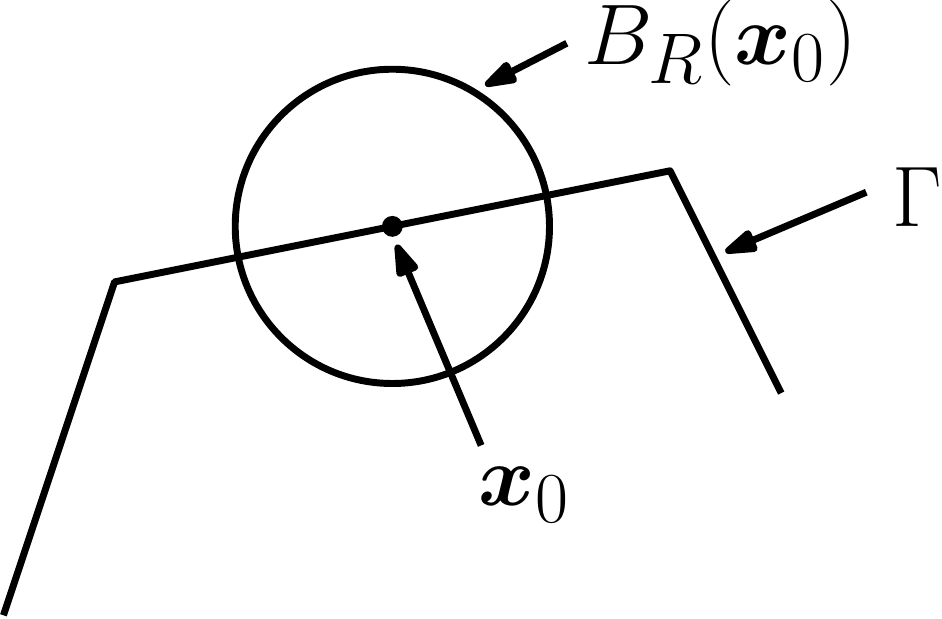}
    \caption{\csentence{The ball $B_R(\bfx_0) \subset \Gamma \cap U$.}}
    \label{fig:ball}
   \end{figure}

  Now, let $\gamma = \Gamma\cap B_R(\bfx_0)$ and note that on
  $\gamma$, both the interface normal $\bfn$ and the jump $[\chi]$ are
  constant. (In fact,
  $[\chi] = \chi_1 - \chi_0 = - (|\Omega_1|^{-1} + |\Omega_2|^{-1})$
  is constant on the entire interface $\Gamma$.)

  To construct the test function $\bfw_c \in Q_c$, we now let
  $\varphi$ be a smooth nonnegative function compactly supported on
  $B_R(\bfx_0)$ and take $\bfv(x) = c \varphi(x) \bfn [\chi]$, where
  again we note that both $\bfn$ and $\chi$ are constants. The
  constant $c$ is chosen such that
  \begin{equation}\label{eq:leminfsupconstantsa}
    (\langle \bfn \cdot \bfv \rangle,[\chi])_\gamma = - \| \chi \|_{\Omega}^2.
  \end{equation}
  Integrating by parts and noting that $\chi$ is constant on each
  subdomain $\Omega_i$, $i = 0, 1$, it follows that this construction
  of $\bfv$ leads to the identity
  \begin{equation}
    b_h(\bfv,\chi)
    = - (\langle \bfn \cdot \bfv\rangle,[\chi])_{\gamma}
    = \| \chi \|_{\Omega}^2.
 \end{equation}
  %% Next we let $\bfv = \bfn \varphi [\chi ]$, where $\varphi$ is a
  %% smooth nonnegative compactly supported function such that
  %% \begin{equation}\label{eq:leminfsupconstantsa}
  %%   (\langle \bfn \cdot \bfv \rangle,[\chi])_\Gamma = \| \chi \|_{\Omega}^2
  %% \end{equation}
  %% and $\text{supp}(\varphi)\subset U$, where $U$ is the set described
  %% in Assumption \ref{assumptionA}.
  %%
  %% We may construct the function
  %% $\varphi$ by letting $\psi:[0,1] \rightarrow [0,1]$ be a smooth
  %% function such that $\psi=1$ on $[0,1/3]$ and $\psi=0$ on $[2/3,1]$
  %% and then setting
  %% \begin{align}
  %%   \varphi(\bfx) = c \psi\left(\frac{\bfx - \bfx_0}{R}\right)
  %% \end{align}
  %% where $\bfx_0$ is a point on $\Gamma$. We then choose $\bfx_0$ and
  %% $R$ such that $B_R(\bfx_0)\subset U$ independently of $h$,
  %% where $B_R(\bfx)$ is the open ball of radius $R$ with center $\bfx$
  %% in $\IR^d$ (see Figure \ref{fig:ball}).
  %%
  %% The constant $c$ is chosen such
  %% that \eqref{eq:leminfsupconstantsa} holds.
  %%
  Now, let
  $\bfw = \bfpi_h \bfv \in \bfV_h$.  It follows that
  \begin{align}
    b_h(\bfw, \chi)
    &=
    b_h(\bfv, \chi) + b_h(\bfw - \bfv, \chi) \\
    &=
    \| \chi \|^2_\Omega -  (\langle  \bfn \cdot (\bfw  - \bfv) \rangle,[\chi])_{\gamma } \\
    &= \| \chi \|^2_\Omega
    - c (\langle \pi_h\varphi - \varphi \rangle,[\chi]^2)_{\gamma} \\
    &\geq \| \chi \|^2_\Omega - cCh \| \chi \|^2_\Omega \\
    &\gtrsim \| \chi \|^2_\Omega.
  \end{align}
  The last inequality holds for all $h\in (0,\bar{h}]$ with $\bar{h}$ sufficiently
  small. (Note that the constants $c$ and $C$ do not depend on $q$.)
  The first inequality follows by noting that
  \begin{align}
    |(\langle \pi_h\varphi - \varphi \rangle,[\chi]^2)_{\gamma}|
    & \leq \|\langle \pi_h\varphi - \varphi \rangle \|_{\gamma} \| [\chi]^2 \|_{\gamma}
    \\
    &\lesssim \| \pi_h \varphi - \varphi \|_{B_R(\bfx_0) }^{1/2}  \| \pi_h \varphi - \varphi \|_{H^1(B_R(\bfx_0))}^{1/2}
    \|\chi\|^2_\Omega
    \\
    &\lesssim h \left( \| \nabla \varphi \|_{B_R(\bfx_0)} + \| \Delta \varphi \|_{B_R(\bfx_0)} \right) \|\chi\|^2_\Omega
    \\
    \label{eq:chiremainder}
    &\lesssim h \|\chi\|^2_\Omega.
  \end{align}
  Here we have used the Cauchy-Schwarz inequality, a trace inequality
  on $B_R(\bfx_0)$, an inequality of the type $ab\lesssim a^2 + b^2$,
  the interpolation estimate (\ref{eq:interpol}) and the definitions
  of $\chi$ \eqref{eq:chi} and $\varphi$. We also note that the
  estimate $\|[\chi]^2\|_{\gamma} \lesssim \|\chi\|^2_{\Omega}$
  follows since $\chi$ is (piecewise) constant.

  Finally, since $q\in Q_c$, we may write $q = c_1 \chi$ for some
  $c_1>0$. (If $c_1 < 0$, we may redefine $\chi$.) Taking
  $\bfw_c = c_2 \bfw$, where $c_2>0$ is chosen such that
  $\tn \bfw_c \tn_h = \| q \|_h$, we have
  \begin{align}
    b_h(\bfw_c,q) &= c_1 c_2 b_h(\bfw,\chi) \\
    &\gtrsim c_1 c_2 \|\chi\|^2_\Omega \\
    &= \frac{c_2}{c_1} \|q\|^2_h \\
    &\gtrsim \|q \|^2_h,
  \end{align}
  since $c_1 = \|q\|_h/\|\chi \|_\Omega$ and $c_2 = \|q\|_h / \tn \bfw
  \tn_h$ and thus $c_1/c_2 = \tn \bfw \tn_h / \| \chi\|_\Omega \sim
  1$.
  %We finally also note that $\|D\bfw_i\|_h \lesssim \|D \bfv_i\|_h \lesssim 1$.
\end{proof}

%\begin{rem} Alternatively we could approximate $p_c$ on the full
%patch meshes and then use the inf-sup condition on those spaces and
%the finally estimate the reminder term.  \end{rem}

\begin{lem}[Inf-sup for $Q_0$]
  \label{lem:infsupsubdomain}
  For each $q \in Q_{0}$ there exists a $\bfw \in \bfW_{h,0} \subset
  \bfV_{h,0}$ with $\|D\bfw \|_{\omega_{h,0}} = \|q -
  \lambda_{\omega_{h,0}}(q)\|_{\omega_{h,0}}$ such that
  \begin{align}
    \| q - \lambda_{\Omega_0}(q) \|^2_{\Omega_{h,0}}
    - h^2\|\nabla q \|^2_{\Omega_{h,0} \setminus \omega_{h,0}}
    &\lesssim
    b_h(\bfw,q),
  \end{align}
  where the bound is uniform w.r.t.~$q$.
\end{lem}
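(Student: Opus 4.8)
The plan is to turn the interface form $b_h$, evaluated on a carefully chosen test function, into a pure subdomain divergence pairing, invoke the \emph{local} inf-sup stability granted on $\omega_{h,0}$ by \eqref{assum:infsup}, and then transport the resulting $L^2$-control from the interior patch $\omega_{h,0}$ out to the full cut region $\Omega_{h,0}$, paying exactly the advertised gradient penalty on the thin collar $\Omega_{h,0}\setminus\omega_{h,0}$. Write $\mu=\lambda_{\omega_{h,0}}(q)$. By \eqref{assum:infsup} for $i=0$ there is a $\bfv\in\bfW_{h,0}$ whose divergence pairs with $q$ above $\tfrac12 m_0\|q-\mu\|_{\omega_{h,0}}\|D\bfv\|_{\omega_{h,0}}$; rescaling and, if necessary, reversing its sign, I set $\bfw$ to the corresponding multiple of $-\bfv$ so that $\|D\bfw\|_{\omega_{h,0}}=\|q-\mu\|_{\omega_{h,0}}$ and $-(\divv\bfw,q)_{\omega_{h,0}}\gtrsim\|q-\mu\|^2_{\omega_{h,0}}$ (the numerator already produces $\|q-\mu\|^2$ because $\int_{\omega_{h,0}}(q-\mu)=0$). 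Now I evaluate $b_h(\bfw,q)$ from \eqref{eq:bh}: since $\bfw\in\bfW_{h,0}$ vanishes on $\overline{\Omega_{h,0}\setminus\omega_{h,0}}$, which by \eqref{assum:omegahclose} contains a neighbourhood of $\Gamma$, both the jump term on $\Gamma$ and the $\Omega_1$ contribution drop out, and $\bfw$ is supported in $\omega_{h,0}\subset\Omega_0$; hence $b_h(\bfw,q)=-(\divv\bfw,q)_{\Omega_0}=-(\divv\bfw,q)_{\omega_{h,0}}\gtrsim\|q-\mu\|^2_{\omega_{h,0}}$.

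The crux is the transport estimate
\[
\|q-\lambda_{\Omega_0}(q)\|^2_{\Omega_{h,0}}\lesssim \|q-\mu\|^2_{\omega_{h,0}} + h^2\|\nabla q\|^2_{\Omega_{h,0}\setminus\omega_{h,0}}.
\]
I would prove it by splitting $\Omega_{h,0}=\omega_{h,0}\cup S$ with $S:=\Omega_{h,0}\setminus\omega_{h,0}$. The average discrepancy is controlled using $\int_{\omega_{h,0}}(q-\mu)=0$ and $|\Omega_0\setminus\omega_{h,0}|\lesssim h$ (from \eqref{assum:omegahclose}), giving $|\lambda_{\Omega_0}(q)-\mu|=|\Omega_0|^{-1}\big|\int_{\Omega_0\setminus\omega_{h,0}}(q-\mu)\big|\lesssim h^{1/2}\|q-\mu\|_{S}$, so this contribution is dominated by $\|q-\mu\|^2_S$. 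It remains to bound $\|q-\mu\|^2_S$. As $S$ is a collar of width $\delta\sim h$ assembled from whole elements of $\mcK_{h,0}$, it is a bounded number of element layers adjacent to $\omega_{h,0}$; I would hop layer by layer, using on each adjacent pair the scaled norm-equivalence
\[
\|q-\mu\|^2_K\lesssim\|q-\mu\|^2_{K'}+h^2\|\nabla q\|^2_{K\cup K'}
\]
valid on the finite-dimensional polynomial space, and absorbing the interior gradient contributions back into $\|q-\mu\|^2_{\omega_{h,0}}$ via the inverse estimate $h\|\nabla q\|_{K'}\lesssim\|q-\mu\|_{K'}$. This yields $\|q-\mu\|^2_S\lesssim\|q-\mu\|^2_{\omega_{h,0}}+h^2\|\nabla q\|^2_S$, and combining with the average bound gives the transport estimate.

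Chaining the three steps gives
\[
\|q-\lambda_{\Omega_0}(q)\|^2_{\Omega_{h,0}} - h^2\|\nabla q\|^2_{\Omega_{h,0}\setminus\omega_{h,0}}\lesssim\|q-\mu\|^2_{\omega_{h,0}}\lesssim b_h(\bfw,q),
\]
which is the claim, the normalization $\|D\bfw\|_{\omega_{h,0}}=\|q-\mu\|_{\omega_{h,0}}$ being recorded in the first step (the precise constant in front of $h^2\|\nabla q\|^2$ is immaterial, as it is later absorbed by the $d_h$-control furnished by Lemma~\ref{lem:infsuptestu}). I expect the transport estimate to be the main obstacle: the reduction of $b_h$ and the averaging are bookkeeping, but moving $L^2$ mass across the cut collar while spending only $h^2\|\nabla q\|^2$ relies essentially on the geometric hypothesis $\delta\sim h$ together with the polynomial structure. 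For \emph{discontinuous} pressure spaces the element-to-element hop is delicate, since the naive patch estimate sees interelement jumps $\|[q]\|_F$ that are absent from the stated penalty; one must then exploit that the relevant piece of $q$ is controlled as a single polynomial on the patches, or otherwise avoid crossing jump faces.
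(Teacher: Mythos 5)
Your proposal is correct and follows essentially the same route as the paper's own proof: the local inf-sup condition \eqref{assum:infsup} on $\omega_{h,0}$ produces (after normalization and a sign flip) a $\bfw\in\bfW_{h,0}$ for which $b_h(\bfw,q)=-(\divv\bfw,q)_{\omega_{h,0}}\gtrsim\|q-\lambda_{\omega_{h,0}}(q)\|^2_{\omega_{h,0}}$ since the interface and $\Omega_1$ terms vanish, the average is changed from $\lambda_{\Omega_0}$ to $\lambda_{\omega_{h,0}}$ (the paper via $L^2$-stability of the averaging operator, you via the zero mean on $\omega_{h,0}$ and the $O(h)$ collar measure --- an equivalent bookkeeping), and the transport estimate is exactly the paper's layer-by-layer argument \eqref{eq:inverseelementpair1}--\eqref{eq:sequence} with the number of layers bounded uniformly by \eqref{assum:omegahclose} together with quasi-uniformity. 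The discontinuous-pressure subtlety you flag at the end is a genuine point, but it afflicts the paper's own proof identically --- in \eqref{eq:inverseelementpair1} the face norm $\|q\|_{F_{12}}$ is the trace from $K_1$ while \eqref{eq:inverseelementpair2} bounds the trace from $K_2$, and these differ by the uncontrolled jump $[q]$ when $q$ is discontinuous --- so this is a shared caveat rather than a gap relative to the published argument, and your explicit handling of the sign mismatch between \eqref{assum:infsup} and $b_h$ is if anything more careful than the paper's.
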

\begin{proof}
  Recall the definitions of $\bfW_{h,0}$ and $\lambda_{\omega_{h,0}}$
  from Assumption \ref{assumptionB}.  We first show that we can change
  the average from $\lambda_{\Omega_0}(q)$ to
  $\lambda_{\omega_{h,0}}(q)$ using the following estimates:
  \begin{align}
    \| q - \lambda_{\Omega_0}(q) \|_{\Omega_{h,0}}
    &\leq
    \| q - \lambda_{\omega_{h,0}}(q) \|_{\Omega_{h,0}}
    + \| \lambda_{\omega_{h,0}}(q)- \lambda_{\Omega_0}(q) \|_{\Omega_{h,0}}
    \\
    &= \| q - \lambda_{\omega_{h,0}}(q) \|_{\Omega_{h,0}}
    + \| \lambda_{\Omega_0}(\lambda_{\omega_{h,0}}(q)- q) \|_{\Omega_{h,0}}
    \\ \label{eq:leminfsupsubdomainsa}
    &\lesssim \| q - \lambda_{\omega_{h,0}}(q) \|_{\Omega_{h,0}},
  \end{align}
  where we first added and subtracted $\lambda_{\omega_{h,0}}(q)$ and
  used the triangle inequality, then used the identity
  $\lambda_{\omega_{h,0}}(q) =
  \lambda_{\Omega_0}(\lambda_{\omega_{h,0}}(q))$, which holds since
  $\lambda_{\Omega_0}$ is an average, and finally we used the
  $L^2(\Omega_{h,0})$ stability $| \lambda_{\Omega_0}(v)| \lesssim
  \|v\|_{\Omega_{h,0}}$ of the average operator.

  Next we have the estimate
  \begin{equation}\label{eq:leminsupsubdomainb}
    \| q \|^2_{\Omega_{h,0}} \lesssim \| q \|^2_{\omega_{h,0}}
    + h^2 \|\nabla q \|_{\Omega_{h,0}\setminus \omega_{h,0}}^2
    \quad \Forall q \in Q_{0},
  \end{equation}
  which follows by first observing that this inverse inequality
  holds:
  \begin{align}\label{eq:inverseelementpair1}
    \| q \|^2_{K_1}
    &\lesssim h^2 \| \nabla q \|^2_{K_1} + h \| q \|_{F_{12}}^2 \\
    \label{eq:inverseelementpair2}
    &\lesssim  h^2 \| \nabla q \|^2_{K_1} + \| q \|^2_{K_2},
  \end{align}
  where $K_1$ and $K_2$ are two neighboring elements sharing the face
  $F_{12}$. Then, starting with $\omega_{h,0}^0 = \omega_{h,0}$, we
  define a sequence of sets $\omega_{h,0}^n$, $n=1,2,\dots$ consisting
  of the union of $\omega_{h,0}^{n-1}$ and all elements $K\subset
  \Omega_{h,0}\setminus \omega_{h,0}^{n-1}$ that share a face with an
  element in $\omega_{h,0}^{n-1}.$ It then follows from
  (\ref{eq:inverseelementpair2}) that
  \begin{equation}\label{eq:sequence}
    \| q \|^2_{\omega_{h,0}^{n}} \lesssim
    \| q \|^2_{\omega_{h,0}^{n-1}} + h^2 \| \nabla q
    \|^2_{\omega_{h,0}^{n} \setminus \omega_{h,0}^{n-1}},  \quad n=1,2,\dots
  \end{equation}
  Using the assumption that $\omega_{h,0}$ is close to $\Omega_0$
  \eqref{assum:omegahclose} together with shape regularity and
  quasi-uniformity of the mesh we conclude that $\omega_{h,0}^{n} =
  \Omega_{h,0}$ for some $n \leq C$ for all $h\in (0,\bar{h}]$ where the
    constant is independent of $h$.  Now (\ref{eq:leminsupsubdomainb})
    follows from a uniformly bounded number of iterations of
    (\ref{eq:sequence}).

    Combining (\ref{eq:leminfsupsubdomainsa}) with
    (\ref{eq:leminsupsubdomainb}), we obtain
    \begin{align}
      \|q - \lambda_{\Omega_{h,0}}(q)\|^2_{\Omega_{h,0}}
      &\lesssim
      \|q - \lambda_{\omega_{h,0}}(q)\|^2_{\Omega_{h,0}}
      \\
      &\lesssim
      \|q - \lambda_{\omega_{h,0}}(q)\|^2_{\omega_{h,0}}
      + h^2\|\nabla q\|^2_{\Omega_{h,0}\setminus \omega_{h,0}}
      \\
      &\lesssim
      b_h(\bfw_0,q)  + h^2\|\nabla q\|^2_{\Omega_{h,0}\setminus \omega_{h,0}},
    \end{align}
    where we used the fact that $\nabla \lambda_{\omega_{h,0}}(q) =
    \bfzero$ and at last the inf-sup condition (\ref{assum:infsup}) to
    choose a $\bfw_0 \in \bfW_{h,0}$ with $\|D\bfw_0 \|_{\omega_{h,0}}
    = \|q - \lambda_{\omega_{h,0}}(q)\|_{\omega_{h,0}}$ such that
    $b_h(\bfw_0,q) = \| q - \lambda_{\omega_{h,0}}(q)
    \|^2_{\omega_{h,0}}$.
\end{proof}

%\begin{lem}\label{lem:infsupsubdomain} It holds
%\begin{align}
%\| p - \lambda_{\Omega_i}(p) \|^2_{\Omega_{h,i}}
%&\lesssim
%\| p - \lambda_{\omega_{h,i}}(p) \|^2_{\omega_{h,i}} + h^2\|\nabla p \|^2_{\Omega_{h,i} \setminus \omega_{h,i}}
%\end{align}
%\end{lem}

We now combine the inf-sup estimates for $Q_c$ and $Q_0$ to prove an
inf-sup estimate for $Q_h$.
\begin{lem}[Small inf-sup]
  \label{lem:infsupsmall}
  There are constants $c > 0$ and $M > 0$ such that for each
  $q \in Q_h$ there exists a $\bfw \in \bfV_h$ with
  $\tn \bfw \tn_h = \| q \|_h$ such that
  \begin{equation}
    m \|q\|^2_h - C h^2\|\nabla q_0 \|^2_{\Omega_{h,0}
      \setminus \omega_{h,0}} \leq b_h(\bfw ,q),
  \end{equation}
  where the bound is uniform w.r.t.~$q$.
\end{lem}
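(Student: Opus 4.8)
The plan is to combine the three component estimates --- Lemma~\ref{lem:infsupconstants} for $Q_c$, Lemma~\ref{lem:infsupsubdomain} for $Q_0$, and the inf-sup condition for $Q_1$ (Assumption~\ref{assumptionB} with $i=1$) --- by a Boland--Nicolaides / Verf\"urth argument. Given $q \in Q_h$, I would write $q = q_c + q_0 + q_1$ according to the $L^2$-orthogonal decomposition $Q = Q_c \oplus Q_0 \oplus Q_1$; since the estimate for $Q_0$ controls the norm over all of $\Omega_{h,0}$ and that for $Q_1$ controls it over $\Omega_{h,1} = \Omega_1$, these two patches cover $\bfV_h \times Q_h$ and the three component norms together control $\|q\|_h^2$. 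To each component I attach its test function: $\bfw_c \in \bfV_h$ with $b_h(\bfw_c,q_c) \gtrsim \|q_c\|_h^2$ and $\tn \bfw_c \tn_h = \|q_c\|_h$; a $\bfw_0 \in \bfW_{h,0}$ with $b_h(\bfw_0,q_0) \gtrsim \|q_0\|^2_{\Omega_{h,0}} - h^2\|\nabla q_0\|^2_{\Omega_{h,0}\setminus\omega_{h,0}}$ and $\tn \bfw_0 \tn_h \lesssim \|q_0\|_{\Omega_{h,0}}$; and a $\bfw_1 \in \bfV_{h,1}$ with $b_h(\bfw_1,q_1) \gtrsim \|q_1\|^2_{\Omega_1}$ and $\tn \bfw_1 \tn_h \lesssim \|q_1\|_{\Omega_1}$. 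The candidate test function is $\bfw = \bfw_0 + \bfw_1 + \delta \bfw_c$ with a parameter $\delta > 0$ to be fixed.

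The support structure is what keeps the norms $\tn \bfw_0 \tn_h$, $\tn \bfw_1 \tn_h$ small and eliminates most cross terms. Because $\bfw_0$ is supported in $\omega_{h,0}$, it vanishes on a neighborhood of $\Gamma$ by Assumption~\ref{assumptionB}(c), so its interface contributions to both $\tn \cdot \tn_h$ and $b_h$ drop out; $\bfw_1$ is supported on $\Omega_1$ and I would arrange that it vanishes on $\partial\Omega_1 \supset \Gamma$, so that $[\bfw_1]|_\Gamma$, the boundary contribution to $\tn \bfw_1 \tn_h$, and the mean of $\divv \bfw_1$ all vanish; and $\bfw_c$ is localized near $\Gamma$. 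Expanding $b_h(\bfw,q)$ bilinearly into nine terms, the three diagonal terms give a lower bound $\gtrsim \|q_0\|^2 + \|q_1\|^2 + \delta \|q_c\|_h^2$ minus the remainder $h^2\|\nabla q_0\|^2_{\Omega_{h,0}\setminus\omega_{h,0}}$, while $b_h(\bfw_0,q_1)$, $b_h(\bfw_1,q_0)$ and $b_h(\bfw_1,q_c)$ vanish by disjoint support together with the vanishing of $\bfw_1$ on $\partial\Omega_1$. The only survivors are $b_h(\bfw_0,q_c)$ and the two terms $\delta\, b_h(\bfw_c,q_0)$, $\delta\, b_h(\bfw_c,q_1)$, each of which I would bound by a product $\|q_i\|\,\|q_j\|$ using Cauchy--Schwarz, the triple-norm bounds on the test functions, and --- for the pieces living on $\Gamma$ --- the trace and inverse estimates of the type \eqref{eq:inverseedge}.

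Finally, I would absorb these cross terms into the diagonal contributions by Young's inequality and fix $\delta$ at the value balancing the $\delta$-independent term $b_h(\bfw_0,q_c)$ against the gain $\delta\|q_c\|_h^2$ from the constant mode; this balancing is the heart of the Verf\"urth trick and leaves $b_h(\bfw,q) \gtrsim m\|q\|_h^2 - C h^2\|\nabla q_0\|^2_{\Omega_{h,0}\setminus\omega_{h,0}}$, after which rescaling $\bfw$ by a positive constant enforces $\tn \bfw \tn_h = \|q\|_h$ at the cost of changing only $m$ and $C$. I expect the main obstacle to be precisely this interface coupling. First, one must verify that the surviving cross terms are genuinely subordinate to the diagonal terms, so that a balancing $\delta$ with a strictly positive resulting coefficient actually exists --- the term $b_h(\bfw_0,q_c)$ carries no factor $\delta$, so neither a very small nor a very large $\delta$ works, and the admissible window depends on the component inf-sup constants dominating the cross-term constants. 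Second, the contribution of $[\bfw_1]|_\Gamma$ to $\tn \bfw \tn_h$ is not controlled by $\|D\bfw_1\|_{\Omega_1}$ in general, so the justification that $\bfw_1$ may be taken to vanish on $\partial\Omega_1$ (or otherwise to have a negligible interface trace) is a genuine technical point rather than a routine one, and the localization of $\bfw_0$ and $\bfw_c$ relative to $\Gamma$ is essential to make the remaining interface estimates uniform in how $\Gamma$ cuts $\mcK_{h,0}$.
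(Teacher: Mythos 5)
Your overall architecture is exactly the paper's: the same splitting $q = q_c + q_0 + q_1$, the same composite test function (the paper takes $\bfw = \delta_1 \bfw_c + \bfw_0 + \bfw_1$, with the small parameter on the constant-mode part just as in your proposal), the same identification of the vanishing cross terms $b_h(\bfw_0,q_1) = b_h(\bfw_1,q_0) = b_h(\bfw_1,q_c) = 0$, Young absorption with a second parameter $\delta_2$, and the final rescaling $\widetilde{\bfw} = \|q\|_h \, \bfw / \tn \bfw \tn_h$. But at the point you yourself single out as ``the main obstacle'' there is a genuine gap: you let $b_h(\bfw_0,q_c)$ survive as a $\delta$-free cross term and propose to neutralize it by fixing $\delta$ at a balancing value. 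In the paper this term is \emph{identically zero}, and that is the linchpin of the lemma. Indeed, $\bfw_0 \in \bfW_{h,0}$ vanishes on $\overline{\Omega_{h,0} \setminus \omega_{h,0}}$, hence on a full neighborhood of $\Gamma$, so the interface term $([\bfn\cdot\bfw_0],\langle q_c\rangle)_\Gamma$ and the $\Omega_1$ contribution drop out, while on $\Omega_0$ the constancy of $q_c$ and integration by parts give
\begin{equation*}
  (\divv \bfw_0, q_c)_{\Omega_0}
  = q_c|_{\Omega_0} \int_{\Omega_0} \divv \bfw_0 \dx
  = q_c|_{\Omega_0} \int_{\partial\Omega_0} \bfn \cdot \bfw_0 \,\mathrm{d}s
  = 0,
\end{equation*}
since $\bfw_0$ vanishes on the boundary. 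Consequently \emph{every} surviving cross term, namely $b_h(\bfw_c,q_0)$ and $b_h(\bfw_c,q_1)$, carries the factor $\delta$, and the parameter choice is the routine sequential one: first $\delta_2$ small against $m_0, m_1$, then $\delta$ small against $m_c$. No admissible window condition arises, and the bound is uniform in $q$ with constants depending only on the component inf-sup constants.

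Your fallback does not close the argument, and your own analysis essentially shows why: a $\delta$-free term bounded only by $C\|q_0\|_{\Omega_{h,0}}\|q_c\|_h$ cannot be absorbed for small $\delta$ (the gain $\delta m_c \|q_c\|_h^2$ vanishes faster than the Young cost), large $\delta$ is ruled out because $\delta\, b_h(\bfw_c,q_i)$ costs $O(\delta^2)\|q_c\|_h^2$, and a middle window exists only if the product $m_c m_0$ dominates the square of the relevant continuity constant of $b_h$ --- a domination that nothing in Assumptions~\ref{assumptionA}--\ref{assumptionB} provides, so the conclusion would not be uniform and the lemma as stated would not follow. The missing idea is thus precisely the observation $b_h(\bfw_0,q_c)=0$, together with its twin $b_h(\bfw_1,q_c)=0$ (the paper's terse justification --- integration by parts, $q_c$ piecewise constant, $\bfw_i$ zero on the boundary --- matches your own device of taking $\bfw_1$ with vanishing trace on $\partial\Omega_1$, so that part of your proposal is in line with the paper). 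Two smaller discrepancies: the uniformity-in-cut-position concerns you raise are handled once and for all inside Lemma~\ref{lem:infsupconstants}, where $\bfw_c$ is built from a fixed smooth bump supported in $B_R(\bfx_0)$; in the present lemma the paper estimates $\delta\, b_h(\bfw_c,q_i)$ simply via $\|\divv \bfv\| \leq \|D\bfv\|$ and $\tn \bfw_c \tn_h = \|q_c\|_h$ after replacing $q_i$ by $q_i - \lambda_{\Omega_{h,i}}(q_i)$, so no trace or inverse estimates of the type \eqref{eq:inverseedge} are needed here.
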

\begin{proof}
  Take $\bfw_c$ as in Lemma \ref{lem:infsupconstants}, $\bfw_0$ as in
  Lemma \ref{lem:infsupsubdomain} and $\bfw_1 \in
  \bfW_{h,1}$. Consider the test function $\bfw = \delta_1 \bfw_c +
  \bfw_0 + \bfw_1$ where $\delta_1>0$ is a parameter.  By writing $q =
  q_c + q_0 + q_1 \in Q_c \oplus Q_0 \oplus Q_1$, we have
  \begin{align}
    b_h(\delta_1 \bfw_c + \bfw_0 + \bfw_1, q)
    &=
    \delta_1 b_h(\bfw_c, q_c)
    + \delta_1 b_h( \bfw_c,q_0)
    + \delta_1 b_h( \bfw_c,q_1)
    \\ \nonumber
    &\qquad
    + \underbrace{b_h(\bfw_0,q_c)}_{=0}
    + b_h(\bfw_0,q_0)
    + \underbrace{b_h(\bfw_0,q_1)}_{=0}
    \\ \nonumber
    &\qquad
    + \underbrace{b_h(\bfw_1,q_c)}_{=0}
    + \underbrace{b_h(\bfw_1,q_0)}_{=0}
    + b_h(\bfw_1,q_1)
    \\
    &\geq
    \delta_1 m_c \|q_c\|_h^2
    - \delta_1 |b_h(\bfw_c,q_0)|
    - \delta_1 |b_h(\bfw_c,q_1)|
    \\ \nonumber
    &\qquad +  m_0 \|q_0 - \lambda_{\Omega_{0}}(q_0)\|^2_{\Omega_{h,0}}
    -  C h^2\|\nabla q_0 \|^2_{\Omega_{h,0} \setminus \omega_{h,0}}
    \\ \nonumber
    &\qquad + m_1 \|q_1 - \lambda_{\Omega_{1}}(q_1)\|^2_{\Omega_1}
    \\
    &=\bigstar. \label{eq:bigstar}
    %\\
    %&\geq
    %\delta_1  \left( m_c - \sum_{i=0}^n C\delta_1 \delta_2^{-1} \right) \|q_c\|_\Omega^2
    %+ \sum_{i=0}^n ({m_i} - C \delta_2 )\|q^i\|^2_{\omega_{h,i}}
    %\\
    %&\geq m \left( \|q_c\|^2_\Omega
    %+ \sum_{i=0}^n \|q^i - \lambda_{\omega_{h,i}}(q)\|^2_{\omega_{h,i}}
    %\right)
  \end{align}
  Note that $b_h(\bfw_i,q_c)=0$, $i=0,1$. This follows from
  integration by parts since $q_c\in Q_c$, which is piecewise
  constant, and since $\bfw_i \in \bfW_{h,i}$, which is zero on the
  boundary. The second term and third terms on the right-hand side can
  be estimated as follows
  \begin{align}
    \delta_1 |b_h(\bfw_c,q_i)|
    &=  \delta_1 |b_h(\bfw_c,q_i - \lambda_{\Omega_{h,i}}(q_i))| \\
    &\lesssim \delta_1 \|D \bfw_c\|_{\Omega_{h,i}}
    \|q_i - \lambda_{\Omega_{h,i}}(q_i) \|_{\Omega_{h,i}}
    \\
    &\lesssim \delta_1 \| q_c \|_{\Omega_{h,i}}
    \|q_i - \lambda_{\Omega_{i}}(q_i)\|_{\Omega_{h,i}}
    \\
    &\lesssim \delta_1^2 \delta_2^{-1} \| q_c \|_h^2
    +
      \delta_2 \|q_i - \lambda_{\Omega_{i}}(q_i)\|_{\Omega_{h,i}}^2,
    \label{eq:deltaestimate}
  \end{align}
  where $i=0, 1$ and $\delta_2>0$ is a parameter. Here we have used
  the bound $\| \divv \bfv \| \leq \| D\bfv \|$, the definition of
  $\bfw_c$ from Lemma \ref{lem:infsupconstants} and the inequality $ab
  \leq \epsilon a^2 + (4\epsilon)^{-1} b^2$, which holds for any
  $\epsilon>0$.  Continuing from~\eqref{eq:bigstar}, we
  use~\eqref{eq:deltaestimate} to obtain
  \begin{align}
    \bigstar &\geq
    \delta_1  \left( m_c - C \delta_1 \delta_2^{-1} \right) \|q_c\|_h^2
    + \sum_{i=0}^1 ({m_i} - C \delta_2 )\|q_i-\lambda_{\Omega_{h,i}}(q_i)\|^2_{\Omega_{h,i}}
    \\ \nonumber
    &\qquad - C h^2\|\nabla q_0 \|^2_{\Omega_{h,0} \setminus \omega_{h,0}}
    \\
    &\gtrsim m   \bigg(  \underbrace{ \|q_c\|^2_h
    + \sum_{i=0}^1 \|q^i - \lambda_{\Omega_{i}}(q)\|^2_{\Omega_{h,i}} }_{\displaystyle=\|q\|_h}
    \bigg)
    -  h^2\|\nabla q_0 \|^2_{\Omega_{h,0} \setminus \omega_{h,0}},
  \end{align}
  where we first choose $\delta_2$ sufficiently small and then
  $\delta_1$ sufficiently small to ensure that the two first terms are
  positive.

  Finally, we note that by construction
  \begin{align}
    \tn \bfw \tn_h^2
    %&\lesssim \tn \bfw_c \tn_h^2 + \tn \bfw_0 \tn_h^2 + \tn \bfw_1 \tn_h^2 \\
    &\lesssim \tn \bfw_c \tn_h^2 + \sum_{i=0}^1 \tn \bfw_i \tn^2_h \\
    %&= \| q_c \|_h^2 + \|q_0 \|^2_{\Omega_{h,i}} + \| q_1 \|^2_{\Omega_{h,i}} \\
    &= \| q_c \|_h^2 + \sum_{i=0}^1 \| q_i \|^2_{\Omega_{h,i}} \\
    &= \|q\|_h^2
  \end{align}
  and thus $\tn \bfw \tn_h \lesssim \|q\|_h$. The desired result now
  follows by setting $\widetilde{\bfw}=\|q \|_h
  (\bfw/\tn\bfw\tn_h)$, which gives
  \begin{align}
    b_h(\widetilde{\bfw},q)
    &= \frac{\|q \|_h}{\tn\bfw\tn_h} b_h(\bfw,q) \\
    &\gtrsim b_h(\bfw,q).
  \end{align}
\end{proof}

\begin{prop}\label{prop:infsupbig} (Big inf-sup)
  It holds
  \begin{equation}\label{eq:infsupbig}
    \tn (\bfu,p)\tn_h
    \lesssim \sup_{(\bfv,q) \in \bfV_h \times Q_h}
    \frac{A_h((\bfu,p),(\bfv,q))}{\tn (\bfv,q)\tn_h}.
  \end{equation}
\end{prop}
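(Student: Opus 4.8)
The plan is to establish \eqref{eq:infsupbig} by Verf\"urth's trick, combining the preliminary stability estimate of Lemma~\ref{lem:infsuptestu} with the small inf-sup condition of Lemma~\ref{lem:infsupsmall}. For a given pair $(\bfu,p)\in\bfV_h\times Q_h$ the goal is to construct a single test pair $(\bfv,q)\in\bfV_h\times Q_h$ with
\begin{equation}
  A_h((\bfu,p),(\bfv,q)) \gtrsim \tn(\bfu,p)\tn_h^2,
  \qquad
  \tn(\bfv,q)\tn_h \lesssim \tn(\bfu,p)\tn_h ,
\end{equation}
since then dividing, and bounding the supremum in \eqref{eq:infsupbig} from below by this particular choice, yields the claim.

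First I would test the two ingredients separately. Testing with $(\bfu,-p)$, Lemma~\ref{lem:infsuptestu} controls by $A_h((\bfu,p),(\bfu,-p))$ the velocity energy $\tn\bfu\tn_h^2$, the least-squares residual $h^2\|\Delta\bfu-\nabla p\|^2_{\Omega_{h,0}\setminus\omega_{h,0}}$, and (via the first inequality there) also the weighted gradient $h^2\|\nabla p\|^2_{\Omega_{h,0}\setminus\omega_{h,0}}$; what is still missing is $\|p\|_h$. To recover the pressure, let $\bfw=\bfw(p)\in\bfV_h$ be the test function from Lemma~\ref{lem:infsupsmall}, so that $\tn\bfw\tn_h=\|p\|_h$ and $b_h(\bfw,p)\geq m\|p\|_h^2-Ch^2\|\nabla p_0\|^2_{\Omega_{h,0}\setminus\omega_{h,0}}$. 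Expanding $A_h$, using $b_h(\bfu,0)=0$ and $d_h((\bfu,p),(\bfw,0))=h^2(\Delta\bfu-\nabla p,\Delta\bfw)_{\Omega_{h,0}\setminus\omega_{h,0}}$, gives
\begin{equation}
  A_h((\bfu,p),(\bfw,0)) = a_h(\bfu,\bfw) + b_h(\bfw,p) + h^2(\Delta\bfu-\nabla p,\Delta\bfw)_{\Omega_{h,0}\setminus\omega_{h,0}} .
\end{equation}

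Next I would test with the combination $(\bfv,q)=(\bfu+\delta\bfw,-p)$ for a small parameter $\delta>0$, so that $A_h((\bfu,p),(\bfv,q))=A_h((\bfu,p),(\bfu,-p))+\delta A_h((\bfu,p),(\bfw,0))$. The two cross terms are estimated by continuity of $a_h$ (Lemma~\ref{lem:ahcont}) and $\tn\bfw\tn_h=\|p\|_h$, giving $|a_h(\bfu,\bfw)|\lesssim\tn\bfu\tn_h\|p\|_h$, and by the inverse estimate \eqref{eq:inverse}, giving $h\|\Delta\bfw\|_{\Omega_{h,0}\setminus\omega_{h,0}}\lesssim\|D\bfw\|\lesssim\|p\|_h$, so that the least-squares cross term is $\lesssim h\|\Delta\bfu-\nabla p\|_{\Omega_{h,0}\setminus\omega_{h,0}}\|p\|_h$. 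Applying Young's inequality to each, the $\delta\tn\bfu\tn_h\|p\|_h$ and $\delta h\|\Delta\bfu-\nabla p\|\,\|p\|_h$ contributions are absorbed into the $\tn\bfu\tn_h^2$ and $h^2\|\Delta\bfu-\nabla p\|^2$ terms supplied by Lemma~\ref{lem:infsuptestu}, each leaving only an $O(\delta^2)\|p\|_h^2$ remainder, while the negative $\delta Ch^2\|\nabla p_0\|^2$ is dominated by the $h^2\|\nabla p\|^2$ buffer once $\delta$ is small. Since the useful pressure gain is $\delta m\|p\|_h^2$, of order $\delta$, choosing $\delta$ small enough makes it dominate the $O(\delta^2)\|p\|_h^2$ remainders, yielding $A_h((\bfu,p),(\bfv,q))\gtrsim\tn\bfu\tn_h^2+\|p\|_h^2=\tn(\bfu,p)\tn_h^2$; the triangle inequality then gives $\tn(\bfv,q)\tn_h\leq\tn\bfu\tn_h+\delta\|p\|_h+\|p\|_h\lesssim\tn(\bfu,p)\tn_h$, which completes the argument.

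The hard part will be the bookkeeping of $\delta$ together with the free parameters in the Young inequalities: one must check that all cross terms and the negative $h^2\|\nabla p_0\|^2$ contribution can be absorbed \emph{simultaneously}, exploiting that the pressure control gained from $\bfw$ is of order $\delta$ while the detrimental remainders are of order $\delta^2$ (or, for the gradient term, a small multiple of a quantity already controlled). The other technical point is the least-squares cross term $h^2(\Delta\bfu-\nabla p,\Delta\bfw)$ from $d_h$, which is new relative to the standard saddle-point argument and requires the inverse bound $h\|\Delta\bfw\|\lesssim\|D\bfw\|$ on $\Omega_{h,0}\setminus\omega_{h,0}$ together with $\|\nabla p_0\|_{\Omega_{h,0}\setminus\omega_{h,0}}\lesssim\|\nabla p\|_{\Omega_{h,0}\setminus\omega_{h,0}}$ so that it is dominated by the gradient term furnished by Lemma~\ref{lem:infsuptestu}.
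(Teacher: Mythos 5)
Your proposal is correct and follows essentially the same route as the paper: both use the test pair $(\bfu+\delta\bfw,-p)$ with $\bfw$ from Lemma~\ref{lem:infsupsmall}, control $A_h((\bfu,p),(\bfu,-p))$ via Lemma~\ref{lem:infsuptestu}, bound the cross terms $a_h(\bfu,\bfw)$ and $h^2(\Delta\bfu-\nabla p,\Delta\bfw)_{\Omega_{h,0}\setminus\omega_{h,0}}$ by continuity and the inverse estimate \eqref{eq:inverse}, and absorb them together with the negative $h^2\|\nabla p_0\|^2$ contribution by choosing the parameters small, exactly as the paper does with its $\delta_1,\delta_2$. Your single-parameter bookkeeping with $\delta$-weighted Young inequalities is equivalent to the paper's sequential choice of $\delta_2$ then $\delta_1$, and your closing bound $\tn(\bfu+\delta\bfw,-p)\tn_h\lesssim\tn(\bfu,p)\tn_h$ matches the paper's final step.
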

\begin{proof}
Given $p \in Q_h$, take $\bfw \in \bfV_h$ be as in Lemma
\ref{lem:infsupsmall}. First note that for $d_h$ we have the estimate
\begin{align}
  |d_h&((\bfu,p),(\bfw,0))| \\
  &\lesssim h^2 \| \Delta \bfu - \nabla p \|_{\Omega_{h,0} \setminus \omega_{h,0}} \| \Delta \bfw \|_{\Omega_{h,0} \setminus \omega_{h,0}} \nonumber
  \\
  &\lesssim h^2 \left( \| \Delta \bfu \|_{\Omega_{h,0} \setminus \omega_{h,0}} + \| \nabla p \|_{\Omega_{h,0} \setminus \omega_{h,0}} \right) \| \Delta \bfw \|_{\Omega_{h,0} \setminus \omega_{h,0}}
  \\
  &\lesssim \left( \| D \bfu \|_{\Omega_{h,0} \setminus \omega_{h,0}} + h \| \nabla p \|_{\Omega_{h,0} \setminus \omega_{h,0}} \right) \| D \bfw \|_{\Omega_{h,0} \setminus \omega_{h,0}}
  \\
  &\lesssim \delta_2^{-1} \left( \| D \bfu \|^2_{\Omega_{h,0} \setminus \omega_{h,0}} + h^2 \| \nabla p \|^2_{\Omega_{h,0} \setminus \omega_{h,0}} \right) + \delta_2 \| D \bfw \|^2_{\Omega_{h,0} \setminus \omega_{h,0}}
  \\ \label{eq:dhest}
  &\lesssim \delta_2^{-1} \left( \tn \bfu \tn^2_h +  h^2 \| \nabla p \|^2_{\Omega_{h,0} \setminus \omega_{h,0}} \right) + \delta_2 \| p \|_h^2,
\end{align}
where we have used the Cauchy-Schwarz inequality, the triangle
inequality, the inverse estimate \eqref{eq:inverse}, the definition of
the energy norm \eqref{eq:energynorm} and the definition of $\bfw$ in
Lemma \ref{lem:infsupsmall}.

Next for $\delta_1>0$ we have
\begin{align}
  A_h((\bfu,p)&,(\bfu,-p) + \delta_1 (\bfw,0))
  \\
  &= A_h((\bfu,p),(\bfu,-p))  \nonumber
  \\
  &\qquad + \delta_1 \Big(a_h(\bfu,\bfw) + b_h(\bfw,p)
  + d_h((\bfu,p), (\bfw,0)) \Big)  \nonumber
  \\
  &\gtrsim \tn \bfu \tn_h^2 +
  h^2 \| \nabla p \|^2_{\Omega_{h,0} \setminus \omega_{h,0}}
  \\
  &\qquad  - \delta_1 \left( \delta_2^{-1}\tn \bfu \tn_h^2
  + \delta_2 \| p \|_h^2 \right)  \nonumber
  \\
  &\qquad + \delta_1 \left( \|p\|^2_h -  h^2\|\nabla p \|^2_{\Omega_{h,0} \setminus \omega_{h,0}} \right)  \nonumber
  \\
  &\qquad -\delta_1 \delta_2^{-1} \left( \tn \bfu \tn^2_h +  h^2 \| \nabla p \|^2_{\Omega_{h,0} \setminus \omega_{h,0}} \right) - \delta_1 \delta_2 \| p \|_h^2  \nonumber
  \\
  &\gtrsim \left(1 - C \delta_1 \delta_2^{-1}\right) \tn \bfu \tn_h^2 +
  \left(1- C \delta_1\delta_2^{-1}\right) h^2\| \nabla p \|^2_{\Omega_{h,0} \setminus \omega_{h,0}}
  \\
  &\qquad
  + \delta_1\left(1-C \delta_2\right) \| p \|_h^2, \nonumber
\end{align}
where we have used Lemmas \ref{lem:infsuptestu}, \ref{lem:ahcont},
\ref{lem:infsupsmall} as well as \eqref{eq:dhest}.
Choosing first
$\delta_2$ sufficiently small and then $\delta_1$ sufficiently small, we
arrive at the estimate
  \begin{align}
    \tn (\bfu,p)\tn_h^2
    &= \tn \bfu \tn_h^2 + \| p \|_h^2 \\
    &\lesssim A_h((\bfu,p),(\bfu,-p) + \delta_1(\bfw,0))
  \end{align}
  We now note that
  \begin{align}
    \tn (\bfu + \delta_1 \bfw,-p)\tn_h^2
    &=
    \tn \bfu + \delta_1 \bfw \tn_h^2 + \| p \|_h^2 \\
    &\leq
    \tn \bfu \tn_h^2 + \delta_1 \tn \bfw \tn_h^2 + \| p \|_h^2 \\
    &\lesssim
    \tn \bfu  \tn_h^2 + \| p \|_h^2 \\
    &=
    \tn (\bfu,p)\tn_h^2.
  \end{align}
  and thus the desired estimate \eqref{eq:infsupbig} follows since
  \begin{align}
    \tn (\bfu,p)\tn_h
    &\lesssim
    \frac{A_h((\bfu,p),(\bfu + \delta_1 w,-p))\tn_h}{\tn (u, p) \tn_h} \\
    &\lesssim
    \frac{A_h((\bfu,p),(\bfu + \delta_1 w,-p))}{\tn (\bfu + \delta_1\bfw,-p)\tn_h}.
  \end{align}
  %% We used the estimate
  %% \begin{align}
  %%   |d_h((\bfu,p),(\bfw,0))|&\lesssim \sum_{i=1}^n h^2
  %%   (\| \Delta \bfu \|_{\Omega_{h,i} \setminus \omega_{h,i}}
  %%   + \|\nabla p \|_{\Omega_{h,i} \setminus \omega_{h,i}})
  %%   \|\Delta \bfw \|_{\Omega_{h,i} \setminus \omega_{h,i}}
  %%   \\
  %%   &\lesssim  \sum_{i=1}^n
  %%   (\| D \bfu \|_{\Omega_{h,i} \setminus \omega_{h,i}}
  %%   + h \|\nabla p \|_{\Omega_{h,i} \setminus \omega_{h,i}})
  %%   \| D \bfw \|_{\Omega_{h,i} \setminus \omega_{h,i}}
  %%   \\
  %%   &\lesssim  \sum_{i=1}^n
  %%   \delta_2^{-1}(\| D \bfu \|_{\Omega_{h,i} \setminus \omega_{h,i}}^2
  %%   + h \|\nabla p \|_{\Omega_{h,i} \setminus \omega_{h,i}}^2)
  %%   + \delta_2\| D \bfw \|_{\Omega_{h,i} \setminus \omega_{h,i}}^2
  %%   \\
  %%   &\lesssim
  %%   \delta_2^{-1}\left( \tn \bfu \tn_h^2
  %%   +  \sum_{i=1}^n h \|\nabla p \|_{\Omega_{h,i} \setminus \omega_{h,i}}^2
  %%   \right)
  %%   + \delta_2 \| p \|^2
  %% \end{align}

\end{proof}

\subsubsection{\emph{A~priori} error estimate}

In this section we use the approximation properties of the finite
element spaces to show that the proposed method is optimal.

\begin{thm}
  It holds
  \begin{equation}
    \tn (\bfu,p) - (\bfu_h,p_h) \tn_h
    \lesssim
    h^k(\| \bfu \|_{H^{k+1}(\Omega)} + \| p \|_{H^k(\Omega)} ).
  \end{equation}
\end{thm}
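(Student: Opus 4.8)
The plan is to run the standard Strang-type argument available once Galerkin orthogonality and the big inf-sup condition are in hand. First I would introduce the interpolants $(\bfpi_h\bfu,\pi_h p)\in\bfV_h\times Q_h$ and split the error by the triangle inequality into an interpolation part $\eta:=(\bfu,p)-(\bfpi_h\bfu,\pi_h p)$ and a discrete part $\xi:=(\bfpi_h\bfu,\pi_h p)-(\bfu_h,p_h)\in\bfV_h\times Q_h$, so that
\begin{equation}
  \tn(\bfu,p)-(\bfu_h,p_h)\tn_h\le\tn\eta\tn_h+\tn\xi\tn_h .
\end{equation}
The interpolation part is dispatched immediately: the velocity estimate \eqref{eq:interpolu} and the pressure estimate $\|p-\pi_h p\|_h\lesssim h^{l+1}|p|_{H^{l+1}(\Omega)}$ together give, upon taking $l=k-1$ as for Taylor--Hood, $\tn\eta\tn_h\lesssim h^k(\|\bfu\|_{H^{k+1}(\Omega)}+\|p\|_{H^k(\Omega)})$, which is exactly the claimed rate.

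For the discrete part I would invoke the big inf-sup condition of Proposition~\ref{prop:infsupbig}, which applies since $\xi$ is discrete, to write
\begin{equation}
  \tn\xi\tn_h\lesssim\sup_{(\bfv,q)\in\bfV_h\times Q_h}\frac{A_h(\xi,(\bfv,q))}{\tn(\bfv,q)\tn_h}.
\end{equation}
Decomposing $\xi=-\eta+((\bfu,p)-(\bfu_h,p_h))$ and using Galerkin orthogonality (Proposition~\ref{prop:galerkinorth}), which annihilates the second summand against any discrete test pair, reduces this to $A_h(\xi,(\bfv,q))=-A_h(\eta,(\bfv,q))$. Everything then hinges on a continuity bound for $A_h$ with the nonconforming interpolation error $\eta$ in the first slot.

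This continuity bound I would establish term by term. For $a_h$ the estimate of Lemma~\ref{lem:ahcont} extends verbatim to $\eta$ in the first argument, since its proof only uses Cauchy--Schwarz and $\|D\cdot\|\lesssim\tn\cdot\tn_h$ together with the interface trace term already built into \eqref{eq:energynorm}. For $b_h$ the two volumetric parts are controlled by $\|D\cdot\|\,\|\cdot\|_h$, while the interface term $([\bfn\cdot\,\cdot],\langle\cdot\rangle)_\Gamma$ is split with the weights $h^{\pm1/2}$ supplied by the energy norm: the discrete factor is absorbed by the trace/inverse bound $h^{1/2}\|\langle q\rangle\|_\Gamma\lesssim\|q\|_h$, and the interpolation-error factor by the trace inequality $\|v\|^2_{\Gamma\cap K}\lesssim h^{-1}\|v\|^2_K+h\|\nabla v\|^2_K$ followed by \eqref{eq:interpol}. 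The least-squares term $d_h(\eta,(\bfv,q))$ I would bound by Cauchy--Schwarz and the inverse estimate \eqref{eq:inverse}, turning $h^2\|\Delta\bfv+\nabla q\|$ into $h(\tn\bfv\tn_h+\|q\|_h)$, after which the error factor $h\|\Delta(\bfu-\bfpi_h\bfu)-\nabla(p-\pi_h p)\|$ is estimated by the $m=2$ and $m=1$ cases of \eqref{eq:interpol}.

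I expect the genuine obstacle to lie in this last continuity estimate rather than in the symmetric coercivity/continuity already proved for discrete arguments. In particular, $d_h(\eta,\cdot)$ forces control of the second derivatives $\Delta(\bfu-\bfpi_h\bfu)$, so the full strength of \eqref{eq:interpol} with $m=2$ is required; this is precisely what fixes the regularity at $\bfu\in H^{k+1}$ and $p\in H^k$ and yields the factor $h^k$. Collecting the three contributions gives $|A_h(\eta,(\bfv,q))|\lesssim h^k(\|\bfu\|_{H^{k+1}(\Omega)}+\|p\|_{H^k(\Omega)})\tn(\bfv,q)\tn_h$, hence $\tn\xi\tn_h\lesssim h^k(\|\bfu\|_{H^{k+1}(\Omega)}+\|p\|_{H^k(\Omega)})$; adding the interpolation bound from the first step completes the proof.
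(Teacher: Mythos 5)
Your proposal matches the paper's proof essentially step for step: the same triangle-inequality splitting into interpolation and discrete parts, the same combination of the big inf-sup condition (Proposition~\ref{prop:infsupbig}) with Galerkin orthogonality (Proposition~\ref{prop:galerkinorth}) using a test pair of unit triple norm, and the same term-by-term continuity bounds on $a_h$, $b_h$ and $d_h$ finished by the interpolation estimates. If anything, your treatment of $d_h$ --- applying the inverse estimate \eqref{eq:inverse} only to the discrete factor $\|\Delta \bfv_h\|$ and bounding the error factor directly via \eqref{eq:interpol} with $m=2$ and $m=1$ --- is a slightly more careful rendering of the step the paper compresses into a single display, where an inverse estimate is formally invoked on the non-discrete interpolation error.
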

\begin{proof}
  By the triangle inequality we have
  \begin{align}
    \tn (\bfu,p) - (\bfu_h,p_h) \tn_h
    &\lesssim \tn (\bfu,p) - (\bfpi_h \bfu, \pi_h p) \tn_h
    + \tn (\bfpi_h \bfu, \pi_h p) - (\bfu_h, p_h) \tn_h.
  \end{align}
  From the approximation property o\eqref{eq:interpolu}, we obtain an
  optimal estimate of the first term. To show an optimal estimate for
  the second term we recall the big inf-sup estimate Proposition
  \ref{prop:infsupbig}
  \begin{align}
    \tn (\bfpi_h \bfu, \pi_h p) - (\bfu_h, p_h) \tn_h &\lesssim A_h((\bfpi_h \bfu,\pi_h p)-(\bfu_h, p_h),(\bfv_h,q_h)) \\
    &= A_h((\bfpi_h \bfu,\pi_h p)-(\bfu, p),(\bfv_h,q_h)),
  \end{align}
  where we have used a pair $(\bfv_h,q_h)$ such that $\tn (\bfv_h,q_h)
  \tn_h \lesssim 1$ in the inequality and the Galerkin orthogonality
  \eqref{eq:galerkinorth} to obtain the equality. The terms in $A_h$
  \eqref{eq:Ah} may now be estimated individually. The optimal
  estimate for $a_h$ \eqref{eq:ah} follows immediately from continuity
  \eqref{eq:ahcont}. For $b_h(\bfu-\bfpi_h \bfu,q_h)$ \eqref{eq:bh} we
  have
  \begin{align}
    |b_h&(\bfu-\bfpi_h \bfu,q_h)|
    \\
    &\lesssim \left( \sum_{i=0}^1 \|  \divv (\bfu-\bfpi_h \bfu) \|^2_{\Omega_i} \| q_h \|^2_{\Omega_i}
    + \| [ \bfn \cdot (\bfu-\bfpi_h \bfu)] \|^2_\Gamma \| \langle q_h \rangle \|^2_\Gamma \right)^{1/2} \nonumber
    \\
    &\lesssim \left( \| D(\bfu-\bfpi_h \bfu) \|^2_{\Omega_0 \cup \Omega_1} \|q_h\|^2_{\Omega_0 \cup \Omega_1} + h \left( h^{-1} \| [ \bfu-\bfpi_h \bfu] \|^2_\Gamma \right) \|  q_h  \|^2_\Gamma \right)^{1/2}
    \\
    &\lesssim \left( \tn \bfu-\bfpi_h \bfu \tn^2_h \|q_h\|^2_h + h  \tn \bfu-\bfpi_h \bfu \tn^2_h  \|  q_h  \|^2_\Gamma \right)^{1/2},
  \end{align}
  where we have used the Cauchy-Schwarz inequality in the first two
  inequalities and the definition of the energy norm
  \eqref{eq:energynorm} in the last inequality. Using a similar
  argument we obtain the following estimate for $b_h(\bfv_h, p-\pi_h
  p)$:
  \begin{align}
    |b_h(\bfv_h, p-\pi_h p)| &\lesssim \tn \bfv_h \tn_h \| p-\pi_h p \|_h
  \end{align}
  (see \cite{MassingStokes}). Finally, we estimate $d_h$ to obtain
  \begin{align}
    d_h&((\bfu - \bfpi_h \bfu, p-\pi_h p),(\bfv_h,q_h))
    \\
    &\lesssim h^2 \left( \| \Delta (\bfu - \bfpi_h \bfu) \|^2_{\Omega_{h,0} \setminus \omega_{h,0}} + \| \nabla (p-\pi_h p) \|^2_{\Omega_{h,0} \setminus \omega_{h,0}} \right)^{1/2}  \nonumber
    \\
    &\qquad \times \left( \| \Delta \bfv_h \|^2_{\Omega_{h,0} \setminus \omega_{h,0}} + \| q_h \|^2_{\Omega_{h,0} \setminus \omega_{h,0}} \right)^{1/2} \nonumber
    \\
    &\lesssim  \left(  \| D(\bfu - \bfpi_h \bfu) \|^2_{\Omega_{h,0} \setminus \omega_{h,0}} + \| p-\pi_h p \|^2_{\Omega_{h,0} \setminus \omega_{h,0}} \right)^{1/2}
    \\
    &\qquad \times \left( \| D \bfv_h \|^2_{\Omega_{h,0} \setminus \omega_{h,0}} + \| q_h \|^2_{\Omega_{h,0} \setminus \omega_{h,0}} \right)^{1/2} \nonumber
    \\
    &\lesssim \left( \tn \bfu - \bfpi_h \bfu \tn^2_h + \| p-\pi_h p \|^2_h \right)^{1/2} \left( \tn \bfv_h \tn^2_h + \| q_h \|^2_h \right)^{1/2}
    \\
    &= \left( \tn \bfu - \bfpi_h \bfu \tn^2_h + \| p-\pi_h p \|^2_h \right)^{1/2} \tn (\bfv_h,q_h) \tn_h,
  \end{align}
  where we have used the Cauchy-Schwarz inequality, the triangle
  inequality, the inverse estimate \eqref{eq:inverse}, the definition
  of the energy norm \eqref{eq:energynorm} and at last the definition
  of the full triple norm \eqref{eq:triplenorm}. The \emph{a~priori}
  estimate now follows from the interpolation estimates
  \eqref{eq:interpol} and \eqref{eq:interpolu}
\end{proof}

%------------------------------------------------------------------------------
\section{Results and discussion}

\subsection{Numerical results}

To illustrate the proposed method, we here present convergence tests
in 2D and 3D as well as a more challenging problem simulating flow
around a 3D propeller. The numerical results are performed using
FEniCS \cite{LoggMardalEtAl2011,LoggWells2009a}, which is a collection
of free software for automated, efficient solution of differential
equations. The algorithms used in this work are implemented as part of
the ``multimesh'' functionality present in the development version of
FEniCS and will be part of the upcoming release of FEniCS 1.6 in 2015.

%% (The algorithms used in this work are implemented as part of
%% the ``multimesh'' functionality introduced with the release of FEniCS
%% version~1.5 released in January 2015.)

\subsubsection{Convergence test}

As a first test case, we consider Stokes flow in the domain
$\Omega = [0,1]^d$, $d=2, 3$, with homogeneous Dirichlet boundary
conditions for the velocity (no-slip) on the boundary. For $d = 2$,
the exact solution is given by
\begin{align}
  \bfu(x, y) &= 2 \pi \sin(\pi x) \sin(\pi y) \cdot
  ( \cos(\pi y) \sin(\pi x) , -\cos(\pi x) \sin(\pi y) ), \\
  p(x, y) &= \sin(2 \pi x) \sin(2 \pi y),
\end{align}
with corresponding right-hand side
\begin{equation}
  \bff(x,y)
  = 2 \pi
  \left(
  \begin{matrix}
    \sin(2 \pi y) (\cos(2 \pi x) - 2 \pi^2 \cos(2 \pi x) + \pi^2) \\
    \sin(2 \pi x) (\cos(2 \pi y) + 2 \pi^2 \cos(2 \pi y) - \pi^2).
  \end{matrix}
  \right)
\end{equation}
For $d=3$, the exact solution is
\begin{align}
  \bfu(x, y, z) &= \sin(\pi y)\sin(\pi z) \cdot
  (1, -\sin(\pi y)\cos(\pi z), \sin(\pi z)\cos(\pi y)), \\
  p(x, y, z) &= \pi\cos(\pi x),
\end{align}
with corresponding right-hand side
\begin{equation}
  \bff(x, y, z)
  = \pi^2
  \left(
  \begin{matrix}
    \sin(\pi y)\sin(\pi z) - \sin(\pi x) \\
    \sin(2\pi z)(2\cos(2\pi y) - 1) \\
    \sin(2\pi y)(1 - 2\cos(2\pi z)
  \end{matrix}
  \right).
\end{equation}
In both cases, the velocity field is divergence free and the
right-hand side has been chosen to match the given exact solutions. We
let the overlapping domain $\Omega_1$ be a $d-$dimensional cube
centered in the center of $\Omega$ with side length $0.246246$ rotated
$37$\textdegree\ along the $z$-axis. For $d=3$, $\Omega_1$ is rotated
the same angle along the $y$-axis as well. The domains $\Omega_i$ are
illustrated in Figure \ref{fig:cube_domains}.

The discrete spaces are $P_{k}$--$P_{k-1}$ Taylor--Hood finite
element spaces with continuous piecewise vector-valued polynomials of
degree $k$ discretizing the velocity and discontinuous scalar
polynomials of degree $l = k - 1$ discretizing the pressure. These spaces
are inf-sup stable on the uncut elements of the background mesh
discretizing $\Omega_0$ and on the whole of $\Omega_1$ and therefore
satisfy Assumption~\ref{assumptionB}.

\begin{figure}
  \includegraphics[width=0.65\textwidth]{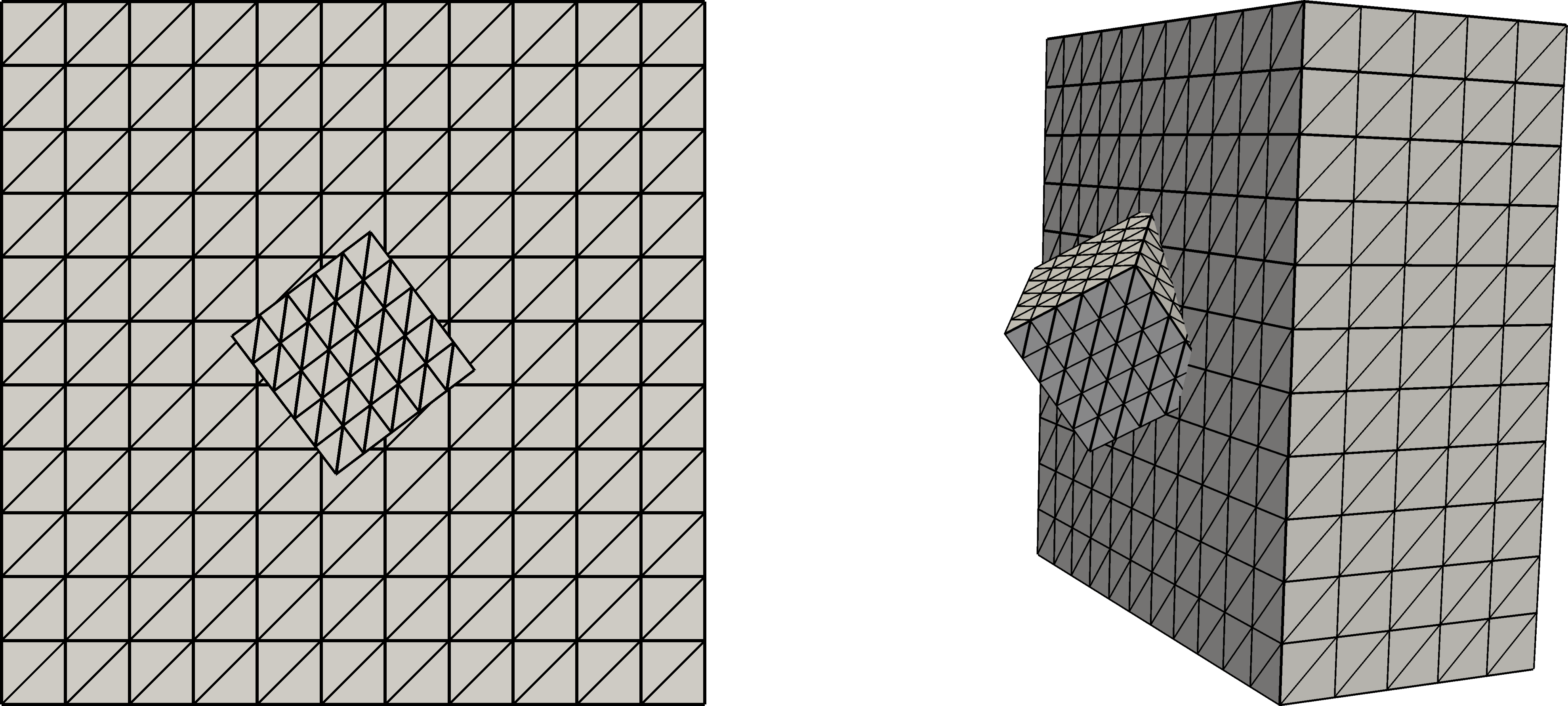}
  \caption{\csentence{Location of the overlapping domain in the
      background mesh.} The domain $\Omega_1$ is placed in the center
    of $\Omega$ and rotated along the $z$-axis in 2D (left) and along
    the $y$- and $z$-axes in 3D (right). }
  \label{fig:cube_domains}
\end{figure}

Figures \ref{fig:convergence_2D} and \ref{fig:convergence_3D} show the
convergence of the error in the $H^1_0$- and $L_2$-norms in 2D and 3D
respectively.  Optimal order of convergence is obtained, although
limited computer memory resources prevented a study for higher degrees
than $k = 3$ in 3D.
%with respect
%to both quantities, although the results for $k = 3$ in 3D are not
%fully conclusive due to limited resources and the high cost of
%computation on fine meshes in this case.
In the convergence plots,
results for small mesh sizes, roughly corresponding to errors below
$10^{-7}$ have been removed because errors could not be reliably
estimated due to numerical round-off errors in the numerical
integration close to the cut cell boundary.

\begin{figure}
  \includegraphics[width=0.9\textwidth]{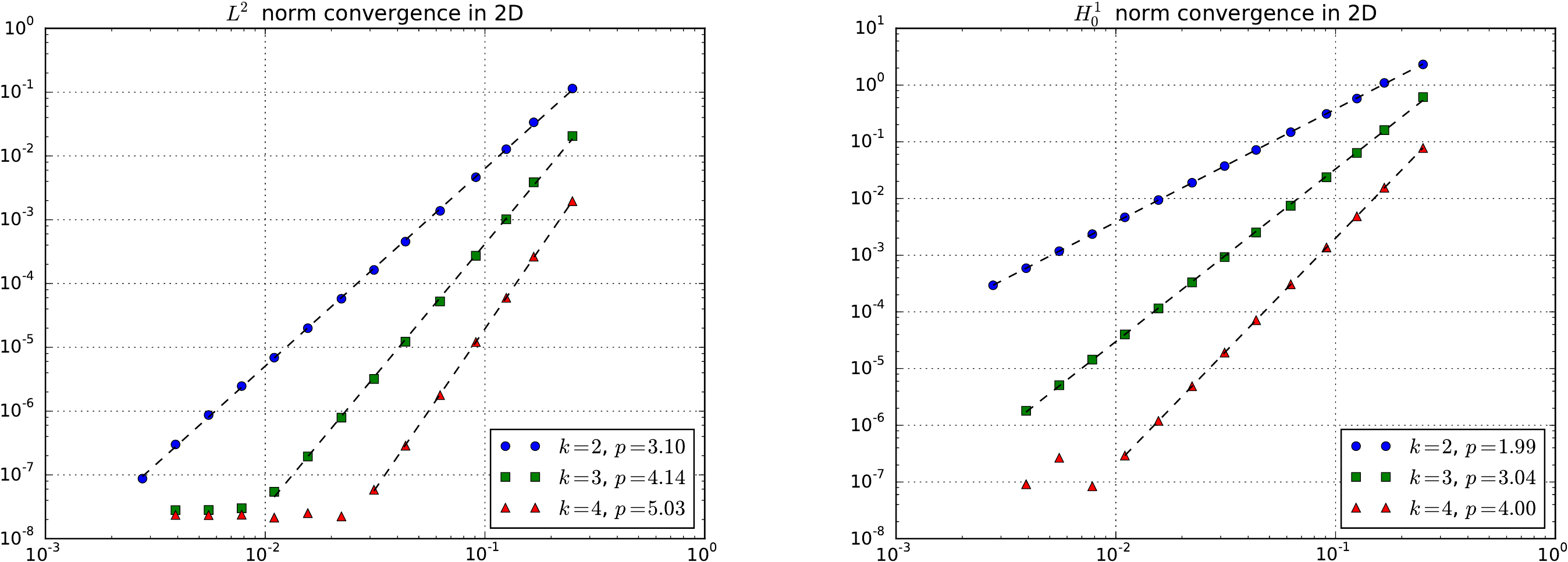}
  \caption{\csentence{Convergence results, 2D.} A rotated square is
    embedded in the unit square background mesh. Results in $L^2$
    (left) and $H^1_0$ (right) norms. }
    \label{fig:convergence_2D}
\end{figure}

\begin{figure}
  \includegraphics[width=0.9\textwidth]{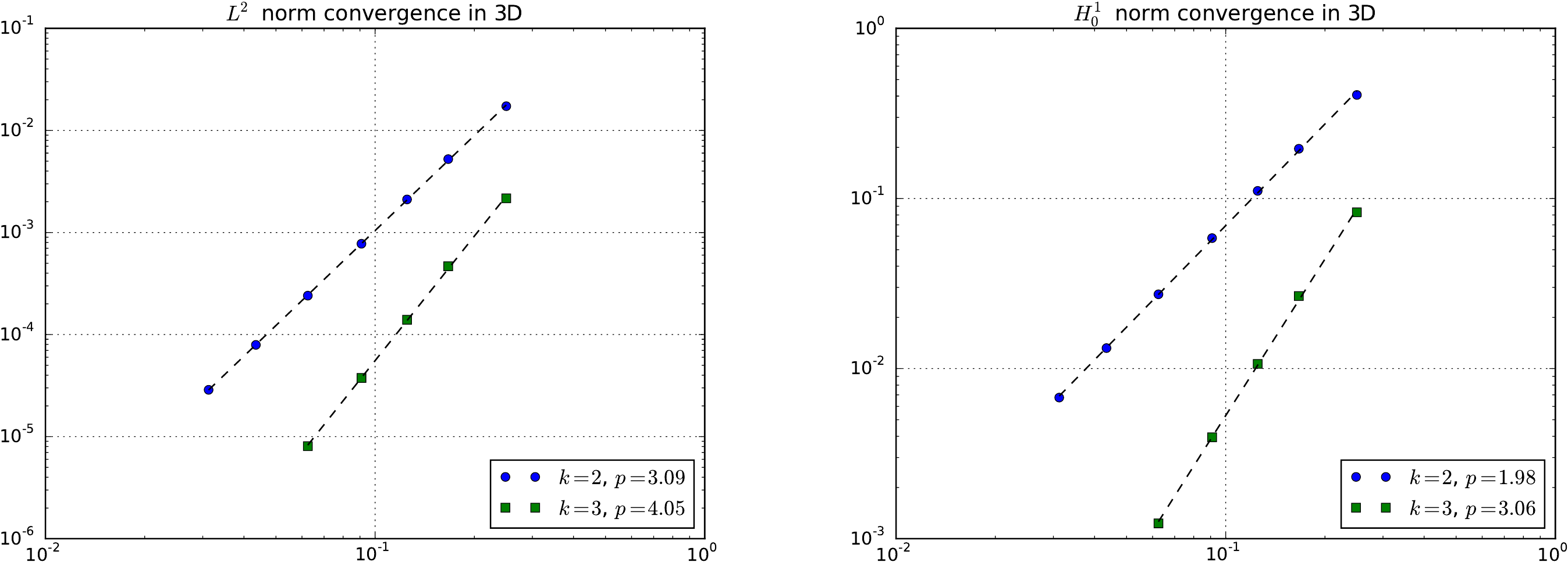}
  \caption{\csentence{Convergence results, 3D.} A rotated cube is
    embedded in the unit cube background mesh. Results in $L^2$
    (left) and $H^1_0$ (right) norms. }
    \label{fig:convergence_3D}
\end{figure}

\subsubsection{Flow around a propeller}

To illustrate the method on a complex geometry we create a propeller
using the CSG tools of the FEniCS component mshr \cite{mshr}, see
Figure \ref{fig:propeller-mesh} (top left). The lengths of the blades
are approximately $0.5$. Then we construct a mesh of the domain
outside the propeller, but inside the unit sphere. This is illustrated
in Figure \ref{fig:propeller-mesh} (top right). The mesh is
constructed using TetGen \cite{tetgen} and is body-fitted to the
propeller. To simulate the flow around the propeller, the mesh is
placed in a background mesh of dimensions $[-2,2]^3$, where we have
removed the elements with all nodes inside a sphere of radius $0.9$,
see Figure \ref{fig:propeller-mesh} (bottom).

\begin{figure}
  \includegraphics[height=0.4\textheight]{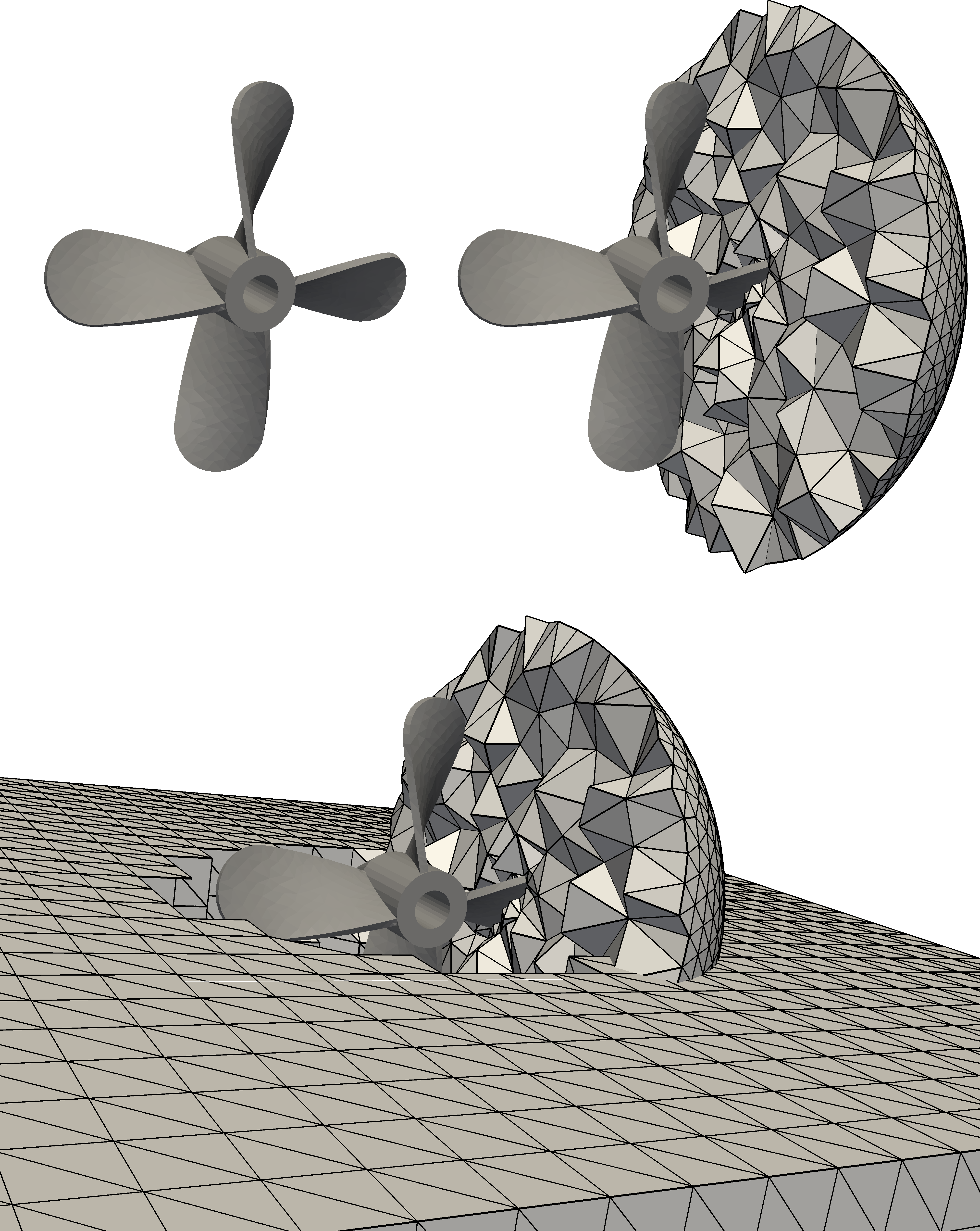}
  \caption{\csentence{Propeller geometry and meshes.} Propeller
    geometry (top left) and body-fitted mesh (top right). Non
    body-fitted background mesh and propeller (bottom).}
  \label{fig:propeller-mesh}
\end{figure}

The simulation is setup with the inflow condition
$\bfu(x,y,z) = (0,0,\sin(\pi(x+2)/4) \sin(\pi(y+2)/4))$ at $z=-2$, the
outflow condition $p=0$ at $z=2$ and $\bfu(x,y,z)=0$ on all other
boundaries, including the boundary of the propeller. The resulting
velocity field using degree $k = 2$ is shown in Figure
\ref{fig:propeller-flow}. Note the continuity of the streamlines of
the velocity going from the finite element space defined on the
background mesh to the finite element space defined on the overlapping
mesh surrounding the propeller.

\begin{figure}
  \includegraphics[width=0.75\textwidth]{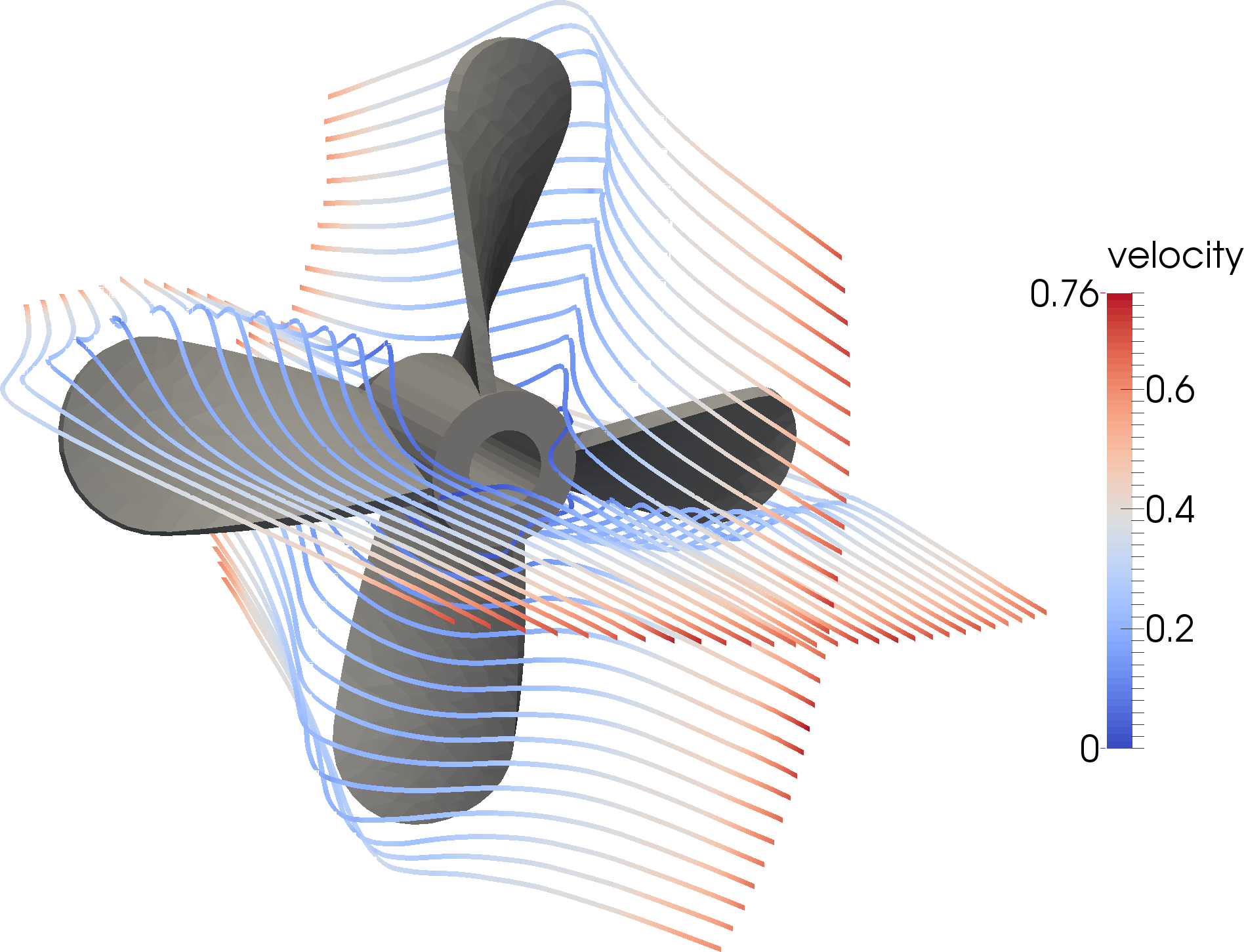}
  \caption{\csentence{Flow around propeller.} Colors indicate speed.}
  \label{fig:propeller-flow}
\end{figure}

%------------------------------------------------------------------------------
\section{Conclusions}

The finite element formulation for discretization of the Stokes
problem presented has been demonstrated to have optimal order
convergence, first by an \emph{a~priori} error estimates and then
confirmed by numerical results. The finite element formulation studied
in this work allows inf-sup stable spaces for the Stokes problem to be
stitched together from multiple non-matching and intersecting meshes
to form a global inf-sup stable space. The method has several
practical applications and one such prime example is the
discretization of flow around complex objects. Future work includes
the extension to time-dependent problems and to fluid--structure
interaction.

\bibliographystyle{amsplain}
%\bibliographystyle{bmc-mathphys} % Style BST file
%\bibliography{bmc_article}      % Bibliography file (usually '*.bib' )
\bibliography{bibliography}

\end{document}